\pdfoutput=1

\documentclass[10pt,reqno, twoside]{amsart}
\usepackage[letterpaper,margin=1.2in,footskip=0.70in]{geometry}

\usepackage{lmodern}
\usepackage[protrusion=true,expansion=true]{microtype}

\usepackage[inline,shortlabels]{enumitem}
\setlist{noitemsep}

\usepackage{amsmath,amssymb,amsthm}
\usepackage{mathtools}
\usepackage{colonequals,extarrows}
\usepackage{tikz-cd}
\usepackage{array}

\usepackage[pdfusetitle,hidelinks,pagebackref]{hyperref}
\usepackage[capitalise,noabbrev]{cleveref}
\crefformat{equation}{\ensuremath{(#2#1#3)}}
\crefmultiformat{equation}{\ensuremath{(#2#1#3)}}{ and~\ensuremath{(#2#1#3)}}{, \ensuremath{(#2#1#3)}}{, and~\ensuremath{(#2#1#3)}}

\numberwithin{figure}{section}
\numberwithin{equation}{section}

\newtheorem{maintheorem}{Theorem}

\newtheorem{lemma}[figure]{Lemma}
\newtheorem{corollary}[figure]{Corollary}
\newtheorem{proposition}[figure]{Proposition}
\theoremstyle{definition}
\newtheorem{definition}[figure]{Definition}
\newtheorem{notation}[figure]{Notation}
\theoremstyle{definition}
\newtheorem{remark}[figure]{Remark}
\theoremstyle{definition}
\newtheorem{example}[figure]{Example}
\theoremstyle{definition}
\newtheorem{construction}[figure]{Construction}
\theoremstyle{definition}
\newtheorem{question}[figure]{Question}

\mathchardef\mhyphen="2D

\newcommand{\et}{\mathrm{\acute{e}t}}
\newcommand{\Fib}{\mathrm{Fib}}
\newcommand{\fpqc}{\mathrm{fpqc}}
\DeclareMathOperator{\Fun}{Fun}
\DeclareMathOperator{\Grp}{Grp}
\DeclareMathOperator{\Hom}{Hom}
\DeclareMathOperator{\im}{im}
\DeclareMathOperator{\Ran}{Ran}
\DeclareMathOperator{\LMod}{LMod}

\newcommand{\mult}{\times}
\DeclareMathOperator{\Ob}{Ob}
\newcommand{\op}{\mathrm{op}}
\DeclareMathOperator{\Post}{Post}
\DeclareMathOperator{\pr}{pr}
\newcommand{\pre}{\mathrm{pr}}
\newcommand{\proet}{{\mathrm{pro\acute et}}}
\DeclareMathOperator{\PShv}{PShv}

\newcommand{\red}{\mathrm{red}}
\DeclareMathOperator{\Shv}{Shv}
\DeclareMathOperator{\Sing}{Sing}
\DeclareMathOperator{\Spec}{Spec}
\DeclareMathOperator{\tot}{tot}

\newcommand{\EE}{\mathbf{E}}
\newcommand{\NN}{\mathbf{N}}
\newcommand{\RR}{\mathbf{R}}

\newcommand{\ZZ}{\mathbf{Z}}

\newcommand{\cC}{\mathcal{C}}

\newcommand{\cL}{\mathcal{L}}
\newcommand{\cO}{\mathcal{O}}
\newcommand{\cT}{\mathcal{T}}
\newcommand{\cX}{\mathcal{X}}

\newcommand{\Ani}{\mathcal{S}}

\newcommand{\Set}{\mathrm{Set}}
\newcommand{\Sp}{\mathrm{Sp}}

\newcommand\restr[2]{{\left.\kern-\nulldelimiterspace#1\vphantom{\big|}\right|_{#2}}}
\newcommand{\suchthat}{\;\ifnum\currentgrouptype=16 \middle\fi\vert\;}

\newcolumntype{C}[1]{>{\centering\arraybackslash}p{#1}}

\setcounter{tocdepth}{1}

\AtBeginDocument{
    \def\MR#1{}
}

\begin{document}

\title{On Postnikov completeness for replete topoi}

\subjclass[2020]{18N60, 18F10, 14F06}

\author{Shubhodip Mondal}
\author{Emanuel Reinecke}

\address[Shubhodip Mondal]{Department of Mathematics, University of British Columbia, 1984 Mathematics Road, Vancouver BC  V6T 1Z2, Canada}
\email{smondal@math.ubc.ca}

\address[Emanuel Reinecke]{School of Mathematics, Institute for Advanced Study, 1 Einstein Drive, Princeton, NJ 08540, USA}
\email{reinec@ias.edu}

\begin{abstract}
We show that the hypercomplete $\infty$-topos associated with any replete topos is Postnikov complete, positively answering a question of Bhatt and Scholze;
this will be deduced from the Milnor sequences for sheaves of spaces on replete topoi that we construct. As a corollary, we generalize a result of To\"en on affine stacks.
\end{abstract}

\maketitle

\section{Introduction}
In their seminal work \cite{proet}, Bhatt--Scholze introduced the pro-\'etale site of a scheme, which simplifies many foundational constructions in the theory of $\ell$-adic cohomology and has been fundamental to the recent development of condensed mathematics \cite{BH,cond}.
One of the key properties of the associated $1$-topos is captured by the notion of replete topoi, which they study in \cite[\S~3]{proet}.
We recall this definition here.
\begin{definition}[{\cite[Def.~3.1.1]{proet}}]\label{introdef} A $1$-topos $\cX$ is \emph{replete} if for every diagram $F \colon \ZZ_{\ge 0}^{\op} \to \cX $ with the property that $F_{n+1} \to F_n$ is surjective for all $n$, the natural map $\lim F \to F_n$ is surjective for every $n$.
\end{definition}
Sheaves of spaces on a $1$-topos $\cX$ (see e.g.\ \cite[\S~6.5]{HTT}) form an $\infty$-topos, which we denote by $\Shv_{\infty}(\cX)$.
Let us denote the associated hypercomplete $\infty$-topos by $\Shv_{\infty}(\cX)^{\wedge}$.
One may think of objects of $\Shv_{\infty}(\cX)^{\wedge}$ as sheaves of spaces that satisfy hyperdescent.
In \cite[Qn.~3.1.12]{proet}, the authors ask the following question:
\begin{question}\label{quesintro}
Let $\cX$ be a replete topos.
Do Postnikov towers converge in the hypercomplete $\infty$-topos $\Shv_{\infty}(\cX)^{\wedge}$? 
\end{question}

One important feature of the above question is that it does not impose any finiteness assumptions (such as homotopy dimension $\le n$ \cite[Def.~7.2.1.1]{HTT} or cohomological dimension $\le n$ \cite[Cor.~7.2.2.30]{HTT}) on the $\infty$-topoi that appear in the previously known criteria for convergence of Postnikov towers (see \cref{finiteness} and \cref{cry}).

Certain stable analogs of \cref{quesintro} can already be found in the literature.
In \cite[Prop.~3.3.3]{proet}, the authors show that if $\cX$ is a replete topos, the derived $\infty$-category $D(\cX,\ZZ)$ (or in other words, hypercomplete sheaves of $H\ZZ$-module spectra) is left complete.
In \cite[Prop.~A.10]{akhil}, the case of sheaves of spectra on certain large sites (which satisfy an additional ``quasi-compactness'' condition and admit countable filtered limits) is addressed and used in the context of the arc-topology.
Regarding \cref{quesintro} itself, in \cite[Prop.~3.2.3]{proet}, the authors show that Postnikov towers converge in $\Shv_{\infty} (\cX)^{\wedge}$, when $\cX$ is additionally assumed to be locally weakly contractible (see \cite[Def.~3.2.1]{proet}).
The pro-\'etale topos of a scheme is an example of a locally weakly contractible topos \cite[Prop.~4.2.8]{proet}.
In this paper, we answer \cref{quesintro} in general.
\begin{maintheorem}\label{mainthm}
 Let $\cX$ be a replete topos.
 Then the hypercomplete $\infty$-topos $\Shv_{\infty}(\cX)^{\wedge}$ is Postnikov complete.
\end{maintheorem}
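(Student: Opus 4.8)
The plan is to reduce Postnikov completeness to a statement about the convergence of a limit, and then to control that limit by constructing Milnor exact sequences (as the abstract promises). Recall that $\Shv_\infty(\cX)^\wedge$ is Postnikov complete if for every hypercomplete sheaf $\mathcal{F}$, the natural map $\mathcal{F} \to \lim_n \tau_{\le n}\mathcal{F}$ is an equivalence, where the $\tau_{\le n}\mathcal{F}$ are the Postnikov truncations. Since we already work in the hypercomplete topos, the map $\mathcal{F} \to \lim_n \tau_{\le n}\mathcal{F}$ is automatically $\infty$-connective, so the real content is showing it induces an isomorphism on all homotopy sheaves, i.e.\ that no information is lost ``in the limit.'' The obstruction to this is precisely a $\lim^1$-type phenomenon: the tower $(\tau_{\le n}\mathcal{F})_n$ has surjective transition maps on homotopy objects, so repleteness of the underlying $1$-topos is exactly the hypothesis designed to kill the relevant derived limit.

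**Next I would** make this precise by analyzing the limit $\lim_n \tau_{\le n}\mathcal{F}$ fiberwise. For a fixed point/section, evaluation gives a tower of spaces (or rather of sheaves), and the homotopy groups of a limit of a tower of spaces fit into a Milnor sequence
\begin{equation*}
0 \to {\lim_n}^1\, \pi_{k+1}(\tau_{\le n}\mathcal{F}) \to \pi_k\bigl(\lim_n \tau_{\le n}\mathcal{F}\bigr) \to \lim_n\, \pi_k(\tau_{\le n}\mathcal{F}) \to 0.
\end{equation*}
The crux is to establish the sheaf-theoretic analog of this sequence on the replete topos $\cX$: I would construct a short exact sequence of homotopy sheaves in which the outer terms are $\lim$ and $\lim^1$ of the tower of homotopy sheaves $\pi_{k+1}(\tau_{\le n}\mathcal{F})$. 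For $n \geq k+1$ the transition maps $\pi_k(\tau_{\le n+1}\mathcal{F}) \to \pi_k(\tau_{\le n}\mathcal{F})$ are isomorphisms, so the $\lim$ term simply recovers $\pi_k(\mathcal{F})$; the whole problem therefore collapses to showing the $\lim^1$ term vanishes.

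**The key step—and the main obstacle—is** showing the vanishing of this derived-limit (${\lim}^1$) sheaf using repleteness. Here is where \cref{introdef} enters: a tower of homotopy sheaves with surjective transition maps is exactly the kind of diagram $F\colon \ZZ_{\ge 0}^{\op} \to \cX$ appearing in the definition of repleteness, and repleteness guarantees that $\lim F$ surjects onto each $F_n$, which is precisely the Mittag-Leffler/surjectivity condition that forces ${\lim}^1 = 0$. The subtlety I anticipate is that the relevant transition maps among the truncations $\tau_{\le n}\mathcal{F}$ need not literally be surjective on the nose; one must first pass to an equivalent tower (for instance by replacing $\mathcal{F}$ with a suitable resolution, or by working with the cofiber/fiber sequences $\tau_{\ge n} \to \tau_{\le n}$ relating consecutive truncations) so that repleteness applies. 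Verifying that this reduction is legitimate in the $\infty$-categorical setting, and that the $\lim^1$ vanishing assembles correctly across all degrees $k$, is where the technical weight of the argument lies.

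**Finally,** with the $\lim^1$ sheaves vanishing in every degree, the Milnor sequence degenerates to isomorphisms $\pi_k(\mathcal{F}) \xrightarrow{\sim} \pi_k(\lim_n \tau_{\le n}\mathcal{F})$ for all $k$; since the map $\mathcal{F} \to \lim_n \tau_{\le n}\mathcal{F}$ is a map of hypercomplete sheaves inducing isomorphisms on all homotopy sheaves, Whitehead's theorem in the hypercomplete $\infty$-topos upgrades it to an equivalence. This establishes Postnikov completeness of $\Shv_\infty(\cX)^\wedge$, as desired.
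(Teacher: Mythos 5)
Your proposal has the right overall shape---reduce to computing homotopy sheaves of the limit of a Postnikov (pre)tower via a Milnor sequence and then invoke Whitehead in the hypercomplete topos---and this is indeed the paper's strategy. But two of your claims are wrong in ways that hide the actual content. First, the assertion that $\mathcal{F} \to \lim_n \tau_{\le n}\mathcal{F}$ is ``automatically $\infty$-connective'' is false and would trivialize the theorem: an $\infty$-connective map in a hypercomplete $\infty$-topos is already an equivalence, so nothing would remain to prove; moreover \cref{exam} exhibits a hypercomplete $\infty$-topos in which $\pi_0$ of the limit of a Postnikov tower differs from $\pi_0$ of the object, so the map is not even $1$-connective there. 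Second, you locate the role of repleteness in a Mittag--Leffler vanishing of ${\lim}^1$. For the towers $\bigl(\pi_k(\tau_{\le n}\mathcal{F})\bigr)_n$ the pro-systems are eventually constant, so their ${\lim}^1$ vanishes for trivial reasons and repleteness is not needed for that. The genuine obstacle, which your proposal treats as routine, is that the sheaf-level Milnor sequence does not exist a priori: sheafification does not commute with $\NN$-indexed inverse limits (or with ${\lim}^1$), so one cannot pass from the objectwise Milnor sequence of presheaves to a statement about homotopy \emph{sheaves}. This is exactly where repleteness enters in the paper: homotopy presheaves of sheaves of spaces are \emph{multiplicative} presheaves (\cref{subca}, \cref{cocacola}), and for a replete topos sheafification commutes with $\NN$-indexed $\lim$ and ${\lim}^1$ of multiplicative presheaves (\cref{lim-sheafification}, \cref{lim-sheafification2}), which is what produces the Milnor sequence of \cref{milreplete}. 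Without an argument of this kind your ``crux'' step is a gap, not a lemma.

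Separately, your reduction only addresses the statement that $\mathcal{F} \to \lim_n \tau_{\le n}\mathcal{F}$ is an equivalence for each $\mathcal{F}$, which by \cref{rmkmore} is strictly weaker than Postnikov completeness in the sense of \cref{defconv}. One must show that an arbitrary limit diagram $X \colon N(\ZZ_{\ge 0}^\infty)^\op \to \Shv_\infty(\cX)^\wedge$ whose restriction is a Postnikov pretower is a Postnikov tower, i.e., that $X(\infty) \to X(n)$ is an $n$-truncation for a pretower that is not presented as the tower of truncations of a given object. The paper handles this via \cref{referee}, reducing to $(n+1)$-connectivity of $X(\infty) \to X(n)$ by \cref{useful1}, verifying effective epimorphy separately (using multiplicativity of $\pi_0^{\pre}$ and \cref{lim-sheafification}), and checking the homotopy-sheaf conditions of \cref{infcon} with the Milnor sequence. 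These steps are compatible with your outline, but as written the proposal neither constructs the sequence it relies on nor proves the statement in its stated strength.
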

\begin{remark}\label{rmkmore}
In \cref{mainthm}, we prove that $\Shv_{\infty}(\cX)^{\wedge}$ is Postnikov complete in the sense of \cref{defconv};
this is stronger than just requiring that the natural map $X \to \lim_{n} \tau_{\le n}X$ is an equivalence for every $X \in \cX$. 
\end{remark}

Let us briefly explain the main ingredients in the proof of \cref{mainthm}.
Let $\cX$ be a replete topos. We need to prove that any tower in $\Shv_{\infty} (\cX)^\wedge$ which ``looks like" a Postnikov tower is in fact a Postnikov tower (see \cref{equivalentpost}). 
As explained in detail in the proof of \cref{mainthm}, this follows more or less directly from Milnor sequences, which allow one to calculate homotopy groups of an $\NN$-indexed inverse limit in the expected way.

However, it is quite subtle to have a theory of Milnor sequences in general since sheafification does not commute with infinite limits (see \cite[p.~21]{Toe} and \cref{toeremark}). In fact, it is already unclear if $\pi_0$ commutes with $\NN$-indexed direct products in $\Shv_{\infty}(\cX)$. 
Let us consider the example arising from condensed mathematics when $\cX$ is the pro-\'etale site of a point \cite[Def.~1.2]{cond}. In that case, one can get around this issue by realizing that a sheaf is essentially the same as a presheaf on extremally disconnected sets that sends finite disjoint unions to products \cite[Prop.~2.7]{cond}. 
Roughly speaking, presheaves on the pro-\'etale site of a point that send coproducts to products are essentially as good as sheaves.

Motivated by this property, for a general replete topos we work with a class of abstract presheaves called \emph{multiplicative presheaves} (see \cref{subca}), which send arbitrary disjoint unions to products. We observe in \cref{product-sheafification} that sheafification commutes with $\NN$-indexed products of multiplicative presheaves, overcoming the subtleties discussed above and leading to the existence of Milnor sequences (see \cref{milreplete}).

\begin{remark}
Let $\cX$ be a replete topos.
One might wonder whether $\Shv_{\infty}(\cX)$ is automatically hypercomplete.
In \cref{counterexample}, we show that this is false even if one assumes $\cX$ to be locally weakly contractible.
This also implies that $\Shv_{\infty}(\cX)$ is not Postnikov complete in general (see \cref{darm1}).
\end{remark}
\begin{example}\label{replete-not-finite}
Many topoi naturally appearing in algebraic geometry are replete.
\Cref{mainthm} is applicable in all such contexts, without having to make any assumptions on the finiteness of homotopy or cohomological dimension.
For instance (after fixing set-theoretic issues, see e.g.\ the footnote in \cite[Ex.~3.1.7]{proet}), topoi coming from the fpqc-topology, $v$-topology (see e.g.\ \cite[\href{https://stacks.math.columbia.edu/tag/0EVM}{Tag~0EVM}]{stacks-project}), and the quasisyntomic topology \cite[Def.~ 4.10]{BMS2} are all replete, but in general not of finite cohomological or homotopy dimension:
as a concrete example, arc-descent for the \'etale cohomology of torsion sheaves \cite[Thm.~5.4]{arc} shows that
\[ H^*(\Spec(\RR)_\fpqc,\ZZ/2) \simeq H^*(\Spec(\RR)_v,\ZZ/2)\simeq H^*(\Spec(\RR)_\et,\ZZ/2). \]
The latter is computed by the group cohomology ring $H^* (\ZZ/2, \ZZ/2)$, which is a symmetric algebra on $H^1(\ZZ/2, \ZZ/2)$ and thus nontrivial in all degrees (\textit{cf}.~\cref{exam}). On the other hand, smaller topoi whose covers are subject to certain finiteness conditions (such as Zariski, \'etale, fppf) tend to be not replete (\textit{cf}.~\cite[Ex.~3.1.5]{proet}).
\end{example}
\begin{example}[{\cite[\S~4.3]{proet}}]
Let $G$ be a profinite group.
Let $(BG)_{\proet}$ denote the site of profinite sets with a continuous $G$-action, with covers given by continuous surjections. Then $\Shv\bigl((BG)_{\proet}\bigr)$ is replete. As a special case of \cref{mainthm}, $\Shv_{\infty}\bigl((BG)_{\proet}\bigr)^{\wedge}$ is Postnikov complete.
However, this fails for non-replete variants of this topos that do not take the profinite topology into account;
\textit{cf.}~\cref{exam}.
\end{example}
To exhibit another consequence of \cref{mainthm}, let us mention the following application to affine stacks in the sense of \cite[\S~2.2]{Toe}.
\begin{corollary}[\cref{lastcor}]\label{maincor}
Let $F$ be an affine stack over $\Spec B$ for any ring $B$.
Then the natural map $F \to \lim_{n} \tau_{\le n} F$ is an equivalence.
\end{corollary}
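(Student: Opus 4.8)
The plan is to recognize an affine stack as an object of the hypercomplete $\infty$-topos attached to a replete $1$-topos, after which \cref{mainthm} applies essentially verbatim. Recall from \cite[\S~2.2]{Toe} that affine stacks over $\Spec B$ are defined inside the homotopy theory of simplicial presheaves on affine $B$-schemes equipped with the fpqc (equivalently, faithfully flat quasi-compact) topology, taken with respect to its \emph{local} model structure; by design the fibrant objects there are exactly those satisfying descent along all hypercovers, so that this homotopy theory is that of $\Shv_{\infty}(\cX)^{\wedge}$, where $\cX$ denotes the fpqc $1$-topos on affine $B$-schemes. Thus an affine stack $F = \Spec A$, for a cosimplicial $B$-algebra $A$, is naturally an object of $\Shv_{\infty}(\cX)^{\wedge}$. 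Since $\cX$ is replete by \cref{replete-not-finite}, \cref{mainthm} shows that $\Shv_{\infty}(\cX)^{\wedge}$ is Postnikov complete in the sense of \cref{defconv}, and in particular the canonical map $F \to \lim_{n} \tau_{\le n} F$ is an equivalence, as desired.

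Before this can be invoked literally, one must address the fact that the category of all affine $B$-schemes is not small, so ``the fpqc topos'' does not yet make sense; following the footnote in \cite[Ex.~3.1.7]{proet}, I would fix a cutoff cardinal $\kappa$ and work with the small site of affine $B$-schemes of cardinality $<\kappa$ under the fpqc topology, which yields an honest replete $1$-topos $\cX$. Since an affine stack $\Spec A$ is determined by the small datum $A$, it restricts to a hypercomplete sheaf on this subsite, and the Postnikov tower $\{\tau_{\le n}F\}$ together with its limit is then formed within $\Shv_{\infty}(\cX)^{\wedge}$. The remaining bookkeeping is to check that this passage to a small subsite is compatible with the truncation functors and the limit appearing in the statement, so that the equivalence obtained on the subsite is the same equivalence asserted for $F$ as a stack.

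The main obstacle is therefore the identification above rather than any new homotopy-theoretic input: one must be confident that Toën's affine stacks genuinely live in the hypercomplete $\infty$-topos of the (suitably truncated) replete fpqc topos, and that the large-site set theory does not interfere with the formation of the $\NN$-indexed Postnikov limit. The genuinely nontrivial content—that this limit recovers $F$ despite the absence of any bound on cohomological or homotopy dimension—is supplied entirely by \cref{mainthm}. This is precisely the point at which the naive argument breaks down, since sheafification need not commute with the infinite limit defining $\lim_{n} \tau_{\le n} F$, the subtlety already flagged in \cite[p.~21]{Toe}; the Milnor sequences underlying \cref{mainthm} are exactly what circumvent it.
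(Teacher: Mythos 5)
Your proposal is correct and follows essentially the same route as the paper: the paper's justification (given in \cref{lastrmk}) is likewise that affine stacks are hypercomplete objects of the fpqc $\infty$-topos, that the fpqc topos is replete after fixing set-theoretic issues as in \cite{Toe}, and that \cref{mainthm} then yields the equivalence $F \xrightarrow{\sim} \lim_n \tau_{\le n} F$. Your additional remarks on the local model structure and the choice of a cutoff cardinal only make explicit what the paper leaves implicit.
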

Previously, the above corollary was only known under certain assumptions (\cref{lastrmk}) and the proof relied on the vanishing of cohomology groups of affine schemes with coefficients in unipotent group schemes in degrees $>1$.
As we see now, a more general result follows from \cref{mainthm} simply as a consequence of repleteness.
The Milnor sequences for replete topoi that appears in this paper will also be used in \cite{soon}.
\subsection*{Acknowledgments}
We thank Bhargav Bhatt, Peter Haine, Akhil Mathew, and Peter Scholze for helpful comments and conversations. Special thanks are due to the referee for many suggestions that led to simplifications of the proofs and the exposition.
We are grateful to the Max Planck Institute for Mathematics (Bonn, Germany) and the Institute for Advanced Studies (Princeton, USA) for their support during the preparation of this work. 
Additionally, Mondal acknowledges support from the University of Michigan, the NSF Grant DMS \#1801689, FRG \#1952399 and the University of British Columbia (Vancouver, Canada) and Reinecke acknowledges support from the NSF Grant DMS \#1926686.

\addtocontents{toc}{\protect\setcounter{tocdepth}{2}}

\section{Multiplicative presheaves on Grothendieck sites}

In this section, we introduce a key class of objects for our paper, which we call ``multiplicative presheaves.''
Roughly speaking, they capture the notion of a presheaf that takes arbitrary coproducts to products.
However, a site may fail to have arbitrary coproducts.
To make the notion precise, we will therefore need to phrase this condition in the appropriate category.
Before formally introducing multiplicative presheaves, let us fix some notation.
\begin{notation}\label{leftkan}
Let $\cT$ be a Grothendieck site. The Yoneda embedding gives a natural fully faithful functor $h \colon \cT \to \PShv(\cT)$. Composition with the sheafification functor yields a functor $h^{\sharp} \colon \cT \to \Shv(\cT)$.
If there is no risk of confusion, we write $h_X \colonequals h(X)$ and $h^\sharp_X \colonequals h^\sharp(X)$ for $X \in \cT$. We may view a presheaf $P$ on $\cT$ as a functor $P \colon \cT^\op \to \Set$.
By right Kan extension along $h^\sharp$, we obtain an extended functor $\Ran_{h^\sharp}(P) \colon \Shv(\cT)^\op \to \Set$.
\end{notation}
\begin{lemma}
Let $\cT$ be a Grothendieck site and $X \in \cT$.
Then for all $F \in \Shv(\cT)$, we have $\bigl(\Ran_{h^\sharp}(h_X)\bigr)(F) = \Hom_{\Shv(\cT)}\bigl(F,h^\sharp_X\bigr)$;
that is, $\Ran_{h^\sharp}\bigl(h_X\bigr)$ is represented by $h^\sharp_X$.
\end{lemma}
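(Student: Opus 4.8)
The plan is to compute the right Kan extension pointwise and then collapse the resulting limit using the density of sheafified representables. Fix $F \in \Shv(\cT)$. By the pointwise formula for right Kan extensions, $\bigl(\Ran_{h^\sharp}(h_X)\bigr)(F)$ is the limit of $h_X \colon \cT^\op \to \Set$ over the comma category whose objects are pairs $\bigl(Y, \, h^\sharp_Y \to F\bigr)$ consisting of $Y \in \cT$ and a morphism $h^\sharp_Y \to F$ in $\Shv(\cT)$. The first step is to identify this indexing category with the category of elements of $F$: by the sheafification adjunction together with the Yoneda lemma, $\Hom_{\Shv(\cT)}\bigl(h^\sharp_Y, F\bigr) \cong F(Y)$, so the comma category is equivalent, naturally in $F$, to the category of pairs $(Y,s)$ with $Y \in \cT$ and $s \in F(Y)$.

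With this identification in hand, I would rewrite
\[
\bigl(\Ran_{h^\sharp}(h_X)\bigr)(F) \;\cong\; \lim_{(Y,s)} h_X(Y) \;\cong\; \Hom_{\PShv(\cT)}(F, h_X),
\]
where the last bijection uses the co-Yoneda presentation of the underlying presheaf of $F$ as $\operatorname{colim}_{(Y,s)} h_Y$ and the fact that $\Hom_{\PShv(\cT)}(-, h_X)$ turns it into the preceding limit. On the other side, sheafifying this presentation gives $F \cong \operatorname{colim}_{(Y,s)} h^\sharp_Y$ in $\Shv(\cT)$, so that
\[
\Hom_{\Shv(\cT)}\bigl(F, h^\sharp_X\bigr) \;\cong\; \lim_{(Y,s)} \Hom_{\Shv(\cT)}\bigl(h^\sharp_Y, h^\sharp_X\bigr) \;\cong\; \lim_{(Y,s)} h^\sharp_X(Y),
\]
again by the sheafification adjunction and Yoneda. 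It then remains to match these two limits, and to check that the resulting bijection is natural in $F$, which promotes it to the assertion that $\Ran_{h^\sharp}(h_X)$ is represented by $h^\sharp_X$.

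The step I expect to be the main obstacle is exactly the matching of the two limits, since sheafification does not commute with limits over the index category in general, and so $\lim_{(Y,s)} h_X(Y)$ and $\lim_{(Y,s)} h^\sharp_X(Y)$ cannot be compared naively through the unit $h_X \to h^\sharp_X$. The point that rescues the comparison is that the index is the category of elements of a \emph{sheaf}: this is what allows the identity $h_X(Y) \cong \Hom_{\Shv(\cT)}\bigl(h^\sharp_Y, h^\sharp_X\bigr)$ to convert the limit of the presheaf $h_X$ into a mapping object into the sheaf $h^\sharp_X$. I would therefore organize the argument around the universal property directly: $\Hom_{\Shv(\cT)}(-, h^\sharp_X)$ restricted along $h^\sharp$ recovers $h_X$ via this identification, which furnishes the counit, and terminality among such functors is then precisely the density statement $F \cong \operatorname{colim}_{(Y,s)} h^\sharp_Y$.
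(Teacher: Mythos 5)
Your pointwise computation is internally consistent as far as it goes, but the step you flag as ``the main obstacle'' is not a technical point to be smoothed over: it is a genuine falsehood, and the gap it leaves cannot be closed within your setup. Your first chain of isomorphisms identifies the limit over the comma category with $F \mapsto \Hom_{\PShv(\cT)}(F, h_X)$, while the functor in the statement is $F \mapsto \Hom_{\Shv(\cT)}(F, h^\sharp_X) \cong \Hom_{\PShv(\cT)}(F, h^\sharp_X)$. The identity you invoke to bridge the two, $h_X(Y) \cong \Hom_{\Shv(\cT)}(h^\sharp_Y, h^\sharp_X)$, is false in general: by the sheafification adjunction and the Yoneda lemma the right-hand side is $h^\sharp_X(Y)$, and the unit map $h_X(Y) \to h^\sharp_X(Y)$ is a bijection only when $h_X$ is already a sheaf. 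The fact that the index category is the category of elements of a sheaf does not help, because it is $h_X$, not $F$, whose failure to be a sheaf is the problem. On a non-subcanonical site the two sets can genuinely differ: already for $F = h^\sharp_Y$ one is comparing $\Hom_{\PShv(\cT)}(h^\sharp_Y, h_X)$ with $h^\sharp_X(Y)$, and (for instance, on the two-object site $\{U \to X\}$ in which $U$ covers $X$) the former can be empty while the latter is not. So no argument can rescue the comparison of the two limits.

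The resolution is to abandon the pointwise limit formula and argue from the universal property, which is what the paper's one-line proof does. The extension $\Ran_{h^\sharp}$ relevant here --- and the one actually used later, in the proof of \cref{local-objects} --- is characterized by the adjunction $\Hom\bigl(\Ran_{h^\sharp}(Q), S\bigr) \cong \Hom\bigl(Q, S \circ h^\sharp\bigr)$ for $S \colon \Shv(\cT)^\op \to \Set$; in other words, it is the extension \emph{left} adjoint to restriction along $h^\sharp$, whereas the limit over the comma category of pairs $(Y, h^\sharp_Y \to F)$ computes the extension \emph{right} adjoint to restriction, for which the conclusion of the lemma fails. (For contravariant functors the left/right terminology depends on whether one views them as landing in $\Set$ or in $\Set^\op$; the universal property just quoted is what pins down the intended functor.) Granting that universal property, the lemma is two applications of the Yoneda lemma, with no limits to compute:
\[
\Hom\bigl(\Ran_{h^\sharp}(h_X), S\bigr) \cong \Hom\bigl(h_X, S \circ h^\sharp\bigr) \cong S\bigl(h^\sharp_X\bigr) \cong \Hom\bigl(\Hom_{\Shv(\cT)}(-, h^\sharp_X), S\bigr),
\]
naturally in $S$, whence $\Ran_{h^\sharp}(h_X) \cong \Hom_{\Shv(\cT)}(-, h^\sharp_X)$. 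This is precisely the content of the exercise of Mac Lane cited in the paper (Kan extension in this sense preserves representables), and it is where your argument should have started.
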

\begin{proof}
Follows from the universal property of right Kan extensions.
For example, see \cite[Exercise~X.3.2]{Cats}, which states that the right Kan extension of a (co)representable functor must be (co)represented by the image of the former (co)representing object.
\end{proof}
The following two lemmas are well known and follow from the universal property of right Kan extensions and the adjoint functor theorem.
\begin{lemma}\label{lemma23}
Let $\cT$ be a Grothendieck site. If a functor $P \colon \Shv(\cT)^\op \to \Set $ preserves all small limits, then $P \circ h^\sharp \colon \cT^\op \to \Set$ is a sheaf. Further, there is a natural isomorphism $\Ran_{h^\sharp}(P \circ h^\sharp) \simeq {P}$.
\end{lemma}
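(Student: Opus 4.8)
The plan is to reduce both assertions to the representability of $P$ together with the density of representable sheaves. First I would record the density statement: sheafification $a\colon\PShv(\cT)\to\Shv(\cT)$ is a left adjoint and hence preserves colimits, and $h^\sharp=a\circ h$. Since the Yoneda embedding is dense in $\PShv(\cT)$, any sheaf $F$ satisfies $F\cong\operatorname{colim}_{(X,s)}h_X$ in $\PShv(\cT)$, the colimit running over the category of elements of $F$; applying $a$ and using $F\cong a(F)$ then gives $F\cong\operatorname{colim}_{(X,\,h^\sharp_X\to F)}h^\sharp_X$ in $\Shv(\cT)$, where I have used the identification $F(X)\cong\Hom_{\Shv(\cT)}(h^\sharp_X,F)$ (valid as $F$ is a sheaf) to rewrite the index category as the comma category of pairs $(X,h^\sharp_X\to F)$.

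Next I would invoke the adjoint functor theorem. As $\Shv(\cT)$ is a Grothendieck topos, it is presentable; since $P\colon\Shv(\cT)^\op\to\Set$ preserves all small limits — equivalently, carries small colimits in $\Shv(\cT)$ to limits in $\Set$ — it is therefore representable, say $P\cong\Hom_{\Shv(\cT)}(-,F)$ for some $F\in\Shv(\cT)$. The first assertion is now immediate: for $X\in\cT$ we have $(P\circ h^\sharp)(X)=\Hom_{\Shv(\cT)}(h^\sharp_X,F)\cong F(X)$, naturally in $X$, so $P\circ h^\sharp$ is isomorphic to the underlying presheaf of the sheaf $F$ and is in particular a sheaf.

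For the second assertion I would compute $\Ran_{h^\sharp}(P\circ h^\sharp)$ pointwise. By the limit formula for right Kan extensions, $\Ran_{h^\sharp}(P\circ h^\sharp)(F')=\lim_{(X,\,h^\sharp_X\to F')}(P\circ h^\sharp)(X)=\lim_{(X,\,h^\sharp_X\to F')}P(h^\sharp_X)$ for every $F'\in\Shv(\cT)$, the limit indexed by the same comma category as above. Feeding the density presentation $F'\cong\operatorname{colim}_{(X,\,h^\sharp_X\to F')}h^\sharp_X$ into $P$ and using that $P$ sends this colimit to the corresponding limit in $\Set$ yields $P(F')\cong\lim_{(X,\,h^\sharp_X\to F')}P(h^\sharp_X)$. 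These two identifications are compatible with the unit $P\to\Ran_{h^\sharp}(P\circ h^\sharp)$ of the adjunction $(h^\sharp)^*\dashv\Ran_{h^\sharp}$ between restriction and right Kan extension, so this unit is an isomorphism; this is precisely the natural isomorphism $\Ran_{h^\sharp}(P\circ h^\sharp)\simeq P$.

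I expect the only real obstacle to be the bookkeeping in the density step: one must correctly identify the indexing comma category occurring in the Kan extension formula with the category of elements of $F'$ regarded as a presheaf, and verify that sheafification preserves the relevant colimit so that the presheaf density formula descends to $\Shv(\cT)$. Once this is in place the argument is formal — the adjoint functor theorem repackages the limit-preservation of $P$ as representability, and the two limit computations are then forced to agree through the unit of the adjunction.
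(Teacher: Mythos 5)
Your argument is correct and follows exactly the route the paper indicates (it gives no written proof, only the remark that the lemma ``follows from the universal property of right Kan extensions and the adjoint functor theorem''): you use the representability theorem for the presentable category $\Shv(\cT)$ to write $P \cong \Hom_{\Shv(\cT)}(-,F)$, deduce the sheaf property from $\Hom_{\Shv(\cT)}(h^\sharp_X,F)\cong F(X)$, and verify $\Ran_{h^\sharp}(P\circ h^\sharp)\simeq P$ via the pointwise limit formula and the density of the $h^\sharp_X$ in $\Shv(\cT)$. All steps check out, including the identification of the indexing comma category with the category of elements of $F'$.
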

\begin{lemma}\label{lemma2}
Let $\cT$ be a Grothendieck site. If $P \colon \cT^\op \to \Set$ is a sheaf, then $\Ran_{h^\sharp}(P) \colon \Shv(\cT)^\op \to \Set$ preserves all small limits.
Further, there is a natural isomorphism $\Ran_{h^\sharp}(P)\circ h^\sharp \simeq P$.
\end{lemma}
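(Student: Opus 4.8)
The plan is to identify $\Ran_{h^\sharp}(P)$ with the functor represented by $P$ itself, that is, to produce a natural isomorphism $\Ran_{h^\sharp}(P) \cong \Hom_{\Shv(\cT)}(-,P)$ of functors $\Shv(\cT)^\op \to \Set$. Once this is in hand, both conclusions are immediate. Indeed, the representable functor $\Hom_{\Shv(\cT)}(-,P)$ sends colimits in $\Shv(\cT)$ to limits in $\Set$, hence preserves all small limits as a functor on $\Shv(\cT)^\op$. And precomposing with $h^\sharp$ gives $\Hom_{\Shv(\cT)}(h^\sharp_Y,P)$; since $h^\sharp_Y = (h_Y)^\sharp$ and $P$ is a sheaf, the sheafification adjunction together with the Yoneda lemma yields $\Hom_{\Shv(\cT)}(h^\sharp_Y,P) \cong \Hom_{\PShv(\cT)}(h_Y,P) = P(Y)$, naturally in $Y$, so that $\Ran_{h^\sharp}(P)\circ h^\sharp \cong P$.

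There are two convenient routes to the identification. The quickest uses \cref{lemma23}: apply it to the small-limit-preserving functor $Q \colonequals \Hom_{\Shv(\cT)}(-,P)$ to obtain $\Ran_{h^\sharp}(Q\circ h^\sharp) \cong Q$; since $Q\circ h^\sharp \cong P$ by the computation above, this reads $\Ran_{h^\sharp}(P) \cong \Hom_{\Shv(\cT)}(-,P)$. Alternatively, one argues directly from the pointwise formula for the right Kan extension,
\[ \Ran_{h^\sharp}(P)(F) \;=\; \lim_{(h^\sharp_Y \to F)} P(Y), \]
where the limit is indexed by the comma category of maps $h^\sharp_Y \to F$ with $Y \in \cT$. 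Every sheaf is the colimit of representables indexed by this same category, i.e.\ $F \cong \operatorname*{colim}_{(h^\sharp_Y \to F)} h^\sharp_Y$ by the density theorem in $\PShv(\cT)$ and the fact that sheafification preserves colimits, so that
\[ \lim_{(h^\sharp_Y \to F)} P(Y) \;\cong\; \lim_{(h^\sharp_Y \to F)} \Hom_{\Shv(\cT)}(h^\sharp_Y,P) \;\cong\; \Hom_{\Shv(\cT)}\Bigl(\operatorname*{colim}_{(h^\sharp_Y\to F)} h^\sharp_Y,\,P\Bigr) \;\cong\; \Hom_{\Shv(\cT)}(F,P). \]

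The main thing to be careful about is the assertion that $\Ran_{h^\sharp}(P)$ preserves all small limits. In the representable description this is automatic, which is exactly why I would aim to pin down $\Ran_{h^\sharp}(P) \cong \Hom_{\Shv(\cT)}(-,P)$ first rather than trying to verify limit-preservation by hand from the Kan extension formula. Conceptually, the input that makes this work is that $\Shv(\cT)$ is presentable, so that a small-limit-preserving functor $\Shv(\cT)^\op \to \Set$ is automatically representable; this is the adjoint-functor-theorem content dual to \cref{lemma23}, and the first route packages it cleanly by reducing everything to the single computation $Q\circ h^\sharp \cong P$. I would present the first route as the proof and keep the direct Kan-extension computation only as a sanity check on the indexing category.
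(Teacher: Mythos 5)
Your proof is correct and matches the paper's (very terse) justification, which simply invokes the universal property of right Kan extensions and the adjoint functor theorem: identifying $\Ran_{h^\sharp}(P)$ with the representable functor $\Hom_{\Shv(\cT)}(-,P)$ via \cref{lemma23} and the adjunction $\Hom_{\Shv(\cT)}(h^\sharp_Y,P)\cong P(Y)$ is exactly the intended argument. Both conclusions then follow immediately, so nothing further is needed.
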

Now we are ready to introduce the notion of multiplicative presheaves.
\begin{definition}[Multiplicative presheaves]\label{subca}
Let $\cT$ be a Grothendieck site.
Let $P$ be a presheaf on $\cT$. We say that $P$ is a \emph{multiplicative presheaf} if for every set of objects $\left \{X_j\right \}_{j \in J} \in \cT$ for an indexing set $J$, the natural map of presheaves $\coprod_{j \in J} h_{X_j} \to \left(\coprod_{j \in J} h_{X_j}\right)^\sharp$ induces an isomorphism \[ \Hom_{\PShv(\cT)} \biggl(  \biggl( \coprod_{j \in J} h_{X_j} \biggr)^\sharp, P \biggr) \xlongrightarrow{\sim} \Hom_{\PShv(\cT)} \biggl (   \coprod_{j \in J} h_{X_j}, P \biggr) \simeq \prod_{j \in J} P (X_j). \]
\end{definition}
\begin{remark}\label{evening}
It follows directly from the definition that limits of multiplicative presheaves are again multiplicative.
\end{remark}
\begin{example}\label{ia}
By the universal property of sheafification, every sheaf is an example of a multiplicative presheaf.
\end{example}
The following proposition provides a useful source of multiplicative presheaves.
\begin{proposition}\label{local-objects}
Let $\cT$ be a Grothendieck site and $P' \colon \Shv(\cT)^\op \to \Set$ is a product preserving functor. Let $P \colon \cT^\op \to \Set$ denote the functor $P'\circ h^\sharp$. 
Then $P$ is a multiplicative presheaf.
\end{proposition}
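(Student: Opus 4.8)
The plan is to unwind \cref{subca} and build the comparison isomorphism by hand out of the functor $P'$. Fix a family $\{X_j\}_{j\in J}$ and abbreviate $G \colonequals \bigl(\coprod_{j} h_{X_j}\bigr)^\sharp$. Since sheafification preserves colimits, $G$ is the coproduct $\coprod_{j} h^\sharp_{X_j}$ taken in $\Shv(\cT)$; write $\iota_j \colon h^\sharp_{X_j} \to G$ for the inclusions. By \cref{subca} I must show that the restriction map $\Hom_{\PShv(\cT)}(G, P) \to \prod_{j} P(X_j)$ induced by $\coprod_j h_{X_j} \to G$ is a bijection. The first observation is that the target is nothing but $P'(G)$: because $P'$ preserves products and $G$ is a coproduct, the inclusions induce an isomorphism $P'(G) \xrightarrow{\sim} \prod_j P'(h^\sharp_{X_j}) = \prod_j P(X_j)$. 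So the whole statement becomes the assertion that the natural comparison $P'(G) \to \Hom_{\PShv(\cT)}(G,P)$ is an isomorphism.

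First I would write this comparison down explicitly and check that it splits the restriction map. Given $a \in P'(G)$, define $\psi_a \colon G \to P$ by sending a section $s \in G(Y) = \Hom_{\Shv(\cT)}(h^\sharp_Y, G)$ to $P'(s)(a) \in P'(h^\sharp_Y) = P(Y)$; functoriality of $P'$ makes this natural in $Y$, so $\psi_a$ is a well-defined map of presheaves. Restricting $\psi_a$ along $\coprod_j h_{X_j} \to G$ picks out, in the $j$-th factor, the element $(\psi_a)_{X_j}(\iota_j) = P'(\iota_j)(a)$, which is precisely the $j$-th component of $a$ under the identification $P'(G) \simeq \prod_j P(X_j)$. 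Hence $a \mapsto \psi_a$ is a section of the restriction map, and in particular the restriction map is surjective. This step uses only that $P'$ is a product-preserving functor, and I expect it to be routine.

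The content is therefore injectivity of the restriction map, i.e.\ that every $\phi \colon G \to P$ equals $\psi_a$ for the element $a$ with $a_j = \phi_{X_j}(\iota_j)$. Equivalently, a map of presheaves out of the sheaf $G$ should be determined by its restriction along the sheafification $\coprod_j h_{X_j} \to G$; this is exactly a separatedness property of the target $P$. Concretely, the reduction I would carry out is: given $s \in G(Y)$, pull back the coproduct decomposition $G = \coprod_j h^\sharp_{X_j}$ along $s$ to get $h^\sharp_Y = \coprod_j V_j$ in $\Shv(\cT)$ with $V_j = s^{-1}\bigl(\iota_j(h^\sharp_{X_j})\bigr)$ and $s|_{V_j} = \iota_j \circ r_j$ for some $r_j \colon V_j \to h^\sharp_{X_j}$. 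Product preservation turns this into $P(Y) = P'(h^\sharp_Y) = \prod_j P'(V_j)$, so it suffices to compare $\phi_Y(s)$ and $(\psi_a)_Y(s)$ after projecting to each $P'(V_j)$; one computes directly that the $\psi_a$-component equals $P'(r_j)(a_j)$. Matching the $\phi$-component with $P'(r_j)(a_j)$ is where the real work lies: after choosing a covering of $Y$ by objects $Z$ over which $s$ factors through a single summand $\iota_j$ by a genuine morphism $Z \to X_j$ of $\cT$, naturality of $\phi$ over $\cT$ forces agreement on the cover. Promoting this \emph{local} agreement to an honest equality in $P'(V_j)$ — equivalently, the statement that a product-preserving $P'$ sends a covering epimorphism $\coprod_i h^\sharp_{Y_i} \to h^\sharp_Y$ to an injection, so that $P = P'\circ h^\sharp$ is separated — is the crux, and the main obstacle of the whole argument.

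Once this separatedness is in hand the proposition follows at once: injectivity combines with the section constructed above to make the restriction map a bijection, which is the isomorphism required by \cref{subca}. I would present the result as the product-theoretic analogue of \cref{lemma23}: there, full limit preservation forces $P'\circ h^\sharp$ to be a sheaf because every sheaf is a colimit of representables along its comma category and $P'$ converts that colimit into the defining limit, whereas here only coproducts of representables can be converted, and the output is correspondingly the weaker conclusion that $P$ is merely multiplicative and separated. Tracking exactly which colimits get converted — the coproducts produced by pulling back against $G$ — is precisely what lets product preservation alone suffice.
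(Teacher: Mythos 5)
Your reduction of the statement to the bijectivity of the comparison map $P'(G) \to \Hom_{\PShv(\cT)}(G,P)$, $a \mapsto \psi_a$ (with your notation $G = \bigl(\coprod_j h_{X_j}\bigr)^\sharp$), is correct, and your construction of $\psi_a$ — hence the surjectivity of the restriction map — is fine. The problem is the step you yourself single out as ``the crux'': it is not merely left unproven, it is false. A product-preserving $P'$ does \emph{not} send a covering epimorphism $\coprod_i h^\sharp_{Y_i} \to h^\sharp_Y$ to an injection, and $P = P' \circ h^\sharp$ is \emph{not} separated in general. The example given in the paper immediately after \cref{local-objects} is a counterexample: for a sheaf of abelian groups $F$, the functor $P' = H^n(-,F)$ on $\Shv(\cT)^\op$ preserves products (cohomology carries coproducts of sheaves to products of groups), yet for $n \ge 1$ the resulting presheaf $c \mapsto H^n(c,F)$ has vanishing sheafification, so every nonzero class dies on some cover. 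Since a separated presheaf injects into its sheafification, such a $P$ is as far from separated as possible; in particular your local-to-global step for injectivity cannot be carried out, and your closing remark that the conclusion should be ``multiplicative \emph{and separated}'' repeats the same error.

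The way out is to abandon the local-to-global route entirely: injectivity is not a consequence of agreement on covers, and no amount of refining will produce it. The paper obtains the bijection in one stroke from the Kan-extension formalism: by the universal property of right Kan extension one has $\Hom_{\PShv(\cT)}(G,P) \simeq \Hom\bigl(\Ran_{h^\sharp}(G), P'\bigr)$; by \cref{lemma23} the functor $\Ran_{h^\sharp}(G)$ is represented by the sheaf $G$; hence by the Yoneda lemma the right-hand side is $P'(G)$, which product preservation (together with the fact that sheafification preserves coproducts) identifies with $\prod_j P(X_j)$. In other words, the reason a presheaf map $G \to P$ is both determined by and freely prescribed by its restriction along $\coprod_j h_{X_j} \to G$ is not any separation property of $P$, but the representability of the Kan-extended functor combined with Yoneda — the very mechanism that makes your $\psi_a$ well defined, applied once more to give injectivity. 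Your explicit $\psi_a$ is the concrete incarnation of that Yoneda isomorphism; what is missing from your write-up is the observation that the same formal argument already yields bijectivity, with no separatedness anywhere.
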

\begin{proof}
By the universal property of right Kan extensions, we have a natural bijection
\[ \Hom\biggl(\biggl(\coprod_{j \in J} h_{X_j}\biggl)^\sharp, P\biggr) \simeq  \Hom\biggl(\Ran_{h^\sharp}\biggl(\biggl(\coprod_{j \in J} h_{X_j}\biggr)^\sharp\biggr), P'\biggr). \]
However, \cref{lemma23} implies that $\Ran_{h^\sharp}\bigl(\bigl(\coprod_{j \in J} h_{X_j}\bigr)^\sharp\bigr)$ is represented by $\bigl(\coprod_{j \in J} h_{X_j}\bigr)^\sharp$.
This shows that the right-hand side in the above isomorphism is further in natural bijection with
\[  P'\biggl(\biggl(\coprod_{j \in J} h_{X_j}\biggr)^\sharp\biggr) \simeq \prod_{j \in J}\bigl(P' \circ h^{\sharp}\bigr) (X_j)   \simeq  \prod_{j \in J} P (X_j), \]
where the first isomorphism follows from the facts that $P'$ preserves products and sheafification preserves all colimits, and the second one follows by definition of $P$.
Therefore, the natural map $\Hom\bigl(\bigl(\coprod_{j \in J} h_{X_j}\bigr)^\sharp, P \bigr) \to \prod_{j \in J} P (X_j)$ is an isomorphism, which shows that $P$ is multiplicative, as desired.
\end{proof}
\begin{example}
Let $\cT$ be a Grothendieck site and let $F$ be a sheaf of abelian groups on $\cT$. For any integer $n \ge 0$, the functor that sends an object $c \in \Ob(\cT)$ to the group $H^n (c,F)$ is a multiplicative presheaf on $\cT$. This follows from \cref{cocacola}, which will be proven later.
By the exactness of the sheafification functor, the sheafification of the multiplicative presheaf $c \mapsto H^n(c,F)$ is zero for $n \ge 1$.
\end{example}
\begin{example}
Let $\cT$ be the Grothendieck site of schemes with the $v$-topology. Let $\cO$ denote the structure presheaf  on $\cT$. We claim that $\cO$ is not multiplicative. Indeed, for any scheme $X$, the closed immersion of the reduced subscheme $X_\red \hookrightarrow X$ is a cover in the $v$-topology.
Since the associated morphism of representable presheaves becomes an effective epimorphism after sheafification,
\[ h^\sharp_{X_\red \times_X X_\red} \rightrightarrows h^\sharp_{X_\red} \to h^\sharp_X \]
is a coequalizer diagram of $v$-sheaves.
However, as $X_\red \hookrightarrow X$ is a closed immersion, $X_\red \times_X X_\red \simeq X_\red$ and the two projections $X_\red \times_X X_\red \to X_\red$ are identified with the identity.
Thus, $h^\sharp_{X_\red} \xrightarrow{\sim} h^\sharp_X$ is an isomorphism for all $X$.
In particular, for a multiplicative presheaf $P$ the natural map $P (X) \to P(X_{\red})$ must be an isomorphism.
On the other hand, when $X = \Spec A$ for a nonreduced ring $A$, we have $\cO(X) = A \neq A_\red = \cO(X_\red)$, so that $\cO$ cannot be a multiplicative presheaf. 
\end{example}
\begin{remark}
Let us denote the category of multiplicative presheaves by $\PShv_{\mult}(\cT)$. 
There is a natural inclusion functor $i_1 \colon \PShv_{\mult}(\cT) \to \PShv(\cT)$.
As noted in \cref{evening}, the category $\PShv_{\mult}(\cT)$ has all small limits, and they are preserved by the inclusion functor $i_1$.
Similarly, the inclusion functor $i_2 \colon \Shv(\cT) \to \PShv_{\mult}(\cT)$ also preserves all small limits. Therefore, by the adjoint functor theorem, the sheafification functor $\PShv(\cT) \to \Shv(\cT)$
can be expressed as a composition of the left adjoints $i_1^{\cL} \colon \PShv(\cT) \to \PShv_{\mult}(\cT)$ and $i_2^{\cL} \colon \PShv_{\mult}(\cT) \to \Shv(\cT)$. Our main observation, which is recorded in \cref{lim-sheafification}, is that when $\Shv(\cT)$ is replete, the functor $i_2^{\cL}$ enjoys good formal properties.
\end{remark}
We now proceed towards proving the main result regarding multiplicative presheaves (\cref{lim-sheafification}): we will show that sheafification commutes with $\NN$-indexed inverse limits of multiplicative presheaves. To this end, we will need a technical lemma.
\begin{lemma}\label{lemmasurj}
Let $\cT$ be a Grothendieck site such that the associated $1$-topos $\Shv(\cT)$ is replete. Let $h \colon (F_i)_{i \in \NN} \to (G_i)_{i \in \NN}$ be a map of $\NN$-indexed inverse systems of multiplicative presheaves on $\cT$. Suppose that for each $i \in \NN$, the induced map $u_i \colon F^\sharp_i \to G^\sharp_i$ on sheafifications is an isomorphism. Then the natural map
\[ \theta \colon (\lim_{i} F_i)^\sharp \to (\lim_{i} G_i)^\sharp \]
is a surjection of sheaves.
\end{lemma}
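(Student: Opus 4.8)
The plan is to unwind surjectivity of a map of sheaves into a lifting problem at the presheaf level, and then to solve that problem by playing the multiplicative hypothesis against repleteness. Write $P\colonequals\lim_i F_i$ and $Q\colonequals\lim_i G_i$ for the inverse limits computed objectwise in $\PShv(\cT)$, and let $\phi\colon P\to Q$ be the map induced by $h$, so that $\theta=\phi^\sharp$; by \cref{evening} both $P$ and $Q$ are multiplicative. Since a map of presheaves sheafifies to a surjection precisely when it is locally surjective, and since every section of $(\lim_i G_i)^\sharp$ is locally of the form $\bar s$ for some $s=(s_i)\in Q(c)=\lim_i G_i(c)$ with $c\in\cT$, it suffices to prove the following: for each such $c$ and $s$, there is a surjection $\pi\colon V\to h^\sharp_c$ together with a map $V\to(\lim_i F_i)^\sharp$ whose composite with $\theta$ equals $\bar s\circ\pi$. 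Indeed, this exhibits the pullback $(\lim_i F_i)^\sharp\times_{(\lim_i G_i)^\sharp}h^\sharp_c\to h^\sharp_c$ as surjective, and as $c$ and $s$ vary the maps $\bar s$ are jointly surjective onto $(\lim_i G_i)^\sharp$, so surjectivity of $\theta$ follows because surjections are detected locally on the target.

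First I would build a compatible system of lifts over a tower of covers. As each $u_i$ is an isomorphism, the presheaf map $h_i\colon F_i\to G_i$ is both locally surjective and locally injective. Lifting $s_0$ locally and assembling the lifts over the resulting covering family into a coproduct of representables, I obtain $U_0=\coprod_\alpha h_{X_{0,\alpha}}$ and $\tilde s_0\colon U_0\to F_0$ with $h_0\tilde s_0=s_0\circ(U_0\to h_c)$, where $U_0\to h_c$ is surjective after sheafification. Inductively, given $U_n$ and $\tilde s_n\colon U_n\to F_n$, I lift $s_{n+1}$ locally over each piece of $U_n$ and then use local injectivity of $h_n$ to correct each lift so that it agrees with $\tilde s_n$ under the transition map; assembling over the refining covering family produces a coproduct of representables $U_{n+1}$, a map $p_n\colon U_{n+1}\to U_n$ that is surjective after sheafification, and $\tilde s_{n+1}\colon U_{n+1}\to F_{n+1}$ with $t^F_n\tilde s_{n+1}=\tilde s_n\circ p_n$ and $h_{n+1}\tilde s_{n+1}=s_{n+1}\circ(U_{n+1}\to h_c)$. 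Thus $(\tilde s_n)$ is a map of towers $(U_n)\to(F_n)$ lying over $s$.

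The crux is to pass to the limit without knowing that sheafification commutes with it, and this is where multiplicativity enters. Because each $U_n$ is a coproduct of representables and $F_n$ is multiplicative, the restriction $\Hom(U_n^\sharp,F_n)\to\Hom(U_n,F_n)$ is a bijection, so $\tilde s_n$ extends uniquely to $\hat{\tilde s}_n\colon U_n^\sharp\to F_n$; uniqueness of these extensions forces $(\hat{\tilde s}_n)$ to be a map of towers $(U_n^\sharp)\to(F_n)$. Applying objectwise inverse limits, which for the sheaves $U_n^\sharp$ agree with the limit $V\colonequals\lim_n U_n^\sharp$ taken in $\Shv(\cT)$, this induces $\Sigma\colon V\to P$, and sheafifying gives $\Sigma^\sharp\colon V\to(\lim_i F_i)^\sharp$. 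The same uniqueness, applied now to the multiplicative presheaves $G_n$, shows that $\theta\circ\Sigma^\sharp$ and $\bar s$ have the same restriction to each $U_n$, and hence that $\theta\circ\Sigma^\sharp=\bar s\circ\pi$, where $\pi\colon V\to h^\sharp_c$ is the projection coming from the maps $U_n\to h_c$.

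It remains to produce the surjection $\pi$, which is the single point where repleteness is invoked: the transition maps $U_{n+1}^\sharp\to U_n^\sharp$ and the augmentation $U_0^\sharp\to h^\sharp_c$ are surjections of sheaves, so by repleteness of $\Shv(\cT)$ the projection $\pi\colon V=\lim_n U_n^\sharp\to h^\sharp_c$ is surjective. Combined with the factorization $\theta\circ\Sigma^\sharp=\bar s\circ\pi$ from the previous step, this completes the argument as explained in the first paragraph. The main obstacle is exactly the noncommutation of sheafification with the $\NN$-indexed inverse limit; the multiplicative extension $\tilde s_n\mapsto\hat{\tilde s}_n$ is what circumvents it, allowing the repleteness-controlled \emph{sheaf} limit $V$ to be mapped directly into the \emph{presheaf} limit $P$, so that one never needs to compare $(\lim_n U_n)^\sharp$ with $\lim_n U_n^\sharp$.
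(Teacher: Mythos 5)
Your proposal is correct and follows essentially the same route as the paper's proof: a recursively constructed tower of covers carrying compatible lifts of the $s_i$, the multiplicativity of the $F_n$ (and $G_n$) to promote the resulting maps from coproducts of representables to their sheafifications and obtain a map of towers of sheaves, and repleteness to make the limit $V \to h^\sharp_c$ surjective. The only difference is organizational (you phrase the reduction via a surjection $V \to h^\sharp_c$ with a factorization, rather than "after refining $X$"), which is an equivalent formulation.
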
{}
\begin{proof}
We will denote by $h_i \colon F_i \to G_i$ the maps of presheaves occurring in the map of inverse systems $h$.
Let $\pi_i \colon F_i \to F_{i-1}$ and $\gamma_i \colon G_i \to G_{i-1}$ denote the transition maps of the inverse systems.
Let us pick $X \in \mathcal{T}$ and an element $(\dotsc, s_1, s_0)=(s_i) \in \lim_i G_i(X)$.
To prove the surjectivity of $\theta$, it suffices to show that the image of $(s_i)$ in $(\lim G_i)^\sharp (X)$ can be lifted to $(\lim F_i)^\sharp$, after possibly refining $X$.

Since the map $u_0$ is an isomorphism, by definition of sheafification, one can choose a refinement $\{ X_{j_0} \to X \}_{j_0 \in I^0}$ and $t_{j_0} \in F_0 (X_{j_0})$ such that $h_0 (t_{j_0})= \restr{s_0}{X_{j_0}}$.
Next, using the hypothesis that $u_1$, $u_0$ are isomorphisms and that the $h_i$'s fit into a map of inverse systems, we can find a covering $\{ X_{j_1j_0} \to X_{j_0} \}_{j_1 \in I_{j_0}}$ and $t_{j_1j_0} \in F_1(X_{j_1j_0})$ such that $h_1 (t_{j_1j_0})=  \restr{s_1}{X_{j_1j_0}}$, and further satisfying the compatibility with the transition maps of the inverse system, i.e., $\pi_1(t_{j_1j_0}) = \restr{t_{j_0}}{X_{j_1j_0}}$.

Now we repeat the above procedure recursively. For every $n \ge 0$, we have a covering
\[ \left \{X_{j_{n+1} \dotso j_0} \to X_{j_n \dotso j_0} \right\}_{j_{n+1} \in I_{j_n \dotso j_0}} \] 
and sections $t_{j_{n+1}\dotso j_0} \in F_{n+1}(X_{j_{n+1}\dotso j_0})$ such that
\[ h_{n+1}(t_{j_{n+1}\dotso j_0}) = \restr{s_{n+1}}{X_{j_{n+1}\dotso j_0}} \quad \text{and} \quad \pi_{n+1}(t_{j_{n+1}\dotso j_0}) = \restr{t_{j_{n}\dotso j_0}}{X_{j_{n+1}\dotso j_0}}. \] 
Let us introduce a notation to keep track of the index sets systematically. 
Note that by virtue of our construction, the index set $I_{j_n\dotso j_0}$ depends on the choice of $j_0 \in I^0$, $j_1 \in I_{j_0}$,\ldots,$j_n \in I_{j_{n-1}\dotso j_0}$.
Let us denote all such possible choices of tuples $(j_n,\dotsc,j_0)$ by the set $I^n$. 
Then there are natural maps $w_n \colon I^{n+1} \to I^{n}$ such that fiber over $(j_n,\dotsc, j_0 )$ identifies with $I_{j_n\dotso j_0}$.
The sections $t_{j_{n+1} \dotso j_0} \in F_{n+1}(X_{j_{n+1} \dotso j_0})$ from before can simply be classified by a map
\[ \coprod_{\alpha \in I^{n+1}} h_{X_{\alpha}} \to F_{n+1}. \]

Since $F_{n+1}$ is a multiplicative presheaf, the above map factors uniquely through 
\[ \left( \coprod_{\alpha \in I^{n+1}} h_{X_{\alpha}}\right)^\sharp  \to F_{n+1}. \]
Moreover, by construction and the fact that the presheaves $F_i$ and $G_i$ are multiplicative, we obtain commutative diagrams
\[ \begin{tikzcd}
\left( \coprod_{\alpha \in I^{n+1}} h_{X_{\alpha}}\right)^\sharp  \arrow[rr] \arrow[d, "v_n"] &  & F_{n+1} \arrow[d, "\pi_{n+1}"] \arrow[rr, "h_{n+1}"] &  & G_{n+1} \arrow[d, "\gamma_{n+1}"] \\
\left( \coprod_{\alpha \in I^{n}} h_{X_{\alpha}}\right)^\sharp  \arrow[rr]                    &  & F_n \arrow[rr, "h_n"]                                &  & G_n.
\end{tikzcd} \]
Note that the maps $v_n$ are induced by the maps $w_n \colon I^{n+1} \to I^n$.
Thanks to \cite[\href{https://stacks.math.columbia.edu/tag/00WT}{Tag~00WT}]{stacks-project} and the stability of epimorphisms under coproducts, it follows that $v_n$ is an effective epimorphism. 
The above diagrams define maps of $\NN$-inverse systems on the category of presheaves on $\cT$. Let us denote the inverse limit of the left column (along the maps $v_n$) by $V$, which is automatically a sheaf. By virtue of our construction, we obtain a commutative diagram of sheaves
\[ \begin{tikzcd}
V \arrow[rr] \arrow[d] &  & (\lim F_i)^\sharp \arrow[d, "\theta"] \\
h_X^\sharp \arrow[rr]    &  & (\lim G_i)^\sharp.
\end{tikzcd} \]
Here, the map $h_X^\sharp \to (\lim G_i)^\sharp$ is obtained by sheafifiying the map $h_X \to (\lim G_i)$ classifying the chosen $(s_i) \in \lim G_i(X)$. 
By the repleteness of $\Shv(\cT)$, the map $V \to h_X^\sharp$ is an effective epimorphism. 
Therefore, after possibly refining $X$, one can lift the tautological section of $h_X^\sharp (X)$ to $V$ along the map $V \to h_X^\sharp$. Thus, after possibly refining $X$, the image of $(s_i)$ in $(\lim G_i)^\sharp (X)$ can be lifted to $(\lim F_i)^\sharp$. This finishes the proof.
\end{proof}{}
\begin{proposition}\label{lim-sheafification}
Let $\cT$ be a Grothendieck site such that the associated $1$-topos $\Shv(\cT)$ is replete.
Let $(P_n)_{n \in \NN}$ be an inverse system of multiplicative presheaves of sets on $\cT$.
Then the natural map $(\lim_n P_n)^\sharp \to \lim_n P^\sharp_n$ is an isomorphism.
\end{proposition}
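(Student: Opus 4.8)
The plan is to abbreviate $Q \colonequals \lim_n P_n$ and $R \colonequals \lim_n P^\sharp_n$, and to prove that the natural comparison map $\varphi \colon Q^\sharp \to R$ of the statement is an isomorphism. Here $R$ is a sheaf, being a limit of sheaves, while $Q$ is a multiplicative presheaf by \cref{evening}; the map $\varphi$ is the one induced, via the universal property of sheafification, by the map $Q \to R$ coming from the sheafification units $P_n \to P^\sharp_n$. Since a map of sheaves of sets which is both an epimorphism and a monomorphism is an isomorphism in the $1$-topos $\Shv(\cT)$, it suffices to establish surjectivity and injectivity separately, and I would deduce each from \cref{lemmasurj} applied to a suitable map of $\NN$-indexed inverse systems of multiplicative presheaves.

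For surjectivity, I would apply \cref{lemmasurj} to the map of inverse systems $(P_n) \to (P^\sharp_n)$ given by the sheafification units. Both systems consist of multiplicative presheaves (sheaves are multiplicative by \cref{ia}), and the induced map on sheafifications $P^\sharp_n \to (P^\sharp_n)^\sharp \simeq P^\sharp_n$ is an isomorphism. Hence \cref{lemmasurj} shows that $(\lim_n P_n)^\sharp \to (\lim_n P^\sharp_n)^\sharp$ is a surjection; as $R = \lim_n P^\sharp_n$ is already a sheaf, this map is precisely $\varphi$, which is therefore surjective.

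For injectivity, the key idea is to realize the diagonal of $\varphi$ as the output of \cref{lemmasurj}. Indeed, $\varphi$ is a monomorphism if and only if the diagonal $\Delta \colon Q^\sharp \to Q^\sharp \times_R Q^\sharp$ is an isomorphism, and since $\Delta$ is automatically a monomorphism (being a section of a projection), it is enough to show that $\Delta$ is surjective. To this end, I would apply \cref{lemmasurj} to the map of systems $(P_n) \to (P_n \times_{P^\sharp_n} P_n)$ given levelwise by the diagonals $\delta_n \colon P_n \to P_n \times_{P^\sharp_n} P_n$. Each target $P_n \times_{P^\sharp_n} P_n$ is a pullback of multiplicative presheaves, hence multiplicative by \cref{evening}, and because sheafification is exact, the induced map on sheafifications is the diagonal $P^\sharp_n \to P^\sharp_n \times_{P^\sharp_n} P^\sharp_n$, an isomorphism. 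Thus \cref{lemmasurj} yields that $(\lim_n P_n)^\sharp \to \bigl(\lim_n (P_n \times_{P^\sharp_n} P_n)\bigr)^\sharp$ is surjective. Using that limits commute with limits and that sheafification preserves finite limits, the target is identified with $(Q \times_R Q)^\sharp \simeq Q^\sharp \times_R Q^\sharp$ and this surjection with $\Delta$; hence $\Delta$ is surjective, so $\varphi$ is a monomorphism.

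Combining the two steps, $\varphi$ is simultaneously surjective and injective, hence an isomorphism. The only subtlety I anticipate is bookkeeping: verifying that the auxiliary maps of inverse systems are genuinely compatible with the transition maps (naturality of the unit and of the diagonal), and that the identifications $\lim_n (P_n \times_{P^\sharp_n} P_n) \simeq Q \times_R Q$ and $(Q \times_R Q)^\sharp \simeq Q^\sharp \times_R Q^\sharp$ are the expected ones, so that \cref{lemmasurj} really produces the comparison map $\varphi$ and its diagonal rather than some unrelated map. The genuinely nontrivial input---that appropriate towers of effective epimorphisms stay surjective in the limit---has already been isolated in \cref{lemmasurj} through the repleteness hypothesis, so no further appeal to repleteness is needed at this stage.
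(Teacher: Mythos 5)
Your proposal is correct and follows essentially the same route as the paper's own proof: surjectivity via \cref{lemmasurj} applied to the unit maps $(P_n) \to (P_n^\sharp)$, and injectivity by reducing to surjectivity of the diagonal, identifying it (using exactness of sheafification) with the map $(\lim P_n)^\sharp \to \bigl(\lim (P_n \times_{P_n^\sharp} P_n)\bigr)^\sharp$, and applying \cref{lemmasurj} again to the multiplicative presheaves $P_n \times_{P_n^\sharp} P_n$. The bookkeeping points you flag are indeed routine and the paper treats them as such.
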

\begin{proof}
The surjectivity of the natural map $\displaystyle{\theta \colon (\lim_n P_n)^\sharp \to \lim_n P^\sharp_n}$ follows directly from \cref{lemmasurj}. To prove that $\theta$ is injective, it suffices to show that the diagonal map is surjective, i.e., we wish to show that 
\[ (\lim P_n)^\sharp \to (\lim P_n)^\sharp \times_{\lim P_n^\sharp}(\lim P_n)^\sharp \]
is surjective.
Since sheafification commutes with finite limits, this is equivalent to showing that
\[ (\lim P_n)^\sharp \to \left (\lim P_n \times_{P_n^\sharp} P_n \right)^\sharp \]
is surjective.
By hypothesis, $P_n$ is a multiplicative presheaf. By \cref{evening} and \cref{ia}, the presheaf $P_n \times_{P_n^\sharp} P_n$ is also multiplicative. Since the natural map $P_n \to P_n \times_{P_n^\sharp} P_n$ induces an isomorphism after applying sheafification, we obtain the desired surjectivity by using \cref{lemmasurj}.
\end{proof}{}
As a consequence, we obtain similar results for sheafifications of countable direct products.
\begin{proposition}\label{product-sheafification}
Let $\cT$ be a Grothendieck site such that the associated $1$-topos $\Shv(\cT)$ is replete.
Then sheafification commutes with countable products of multiplicative presheaves on $\cT$.
\end{proposition}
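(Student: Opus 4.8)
The plan is to reduce the statement about countable products to the statement about $\NN$-indexed inverse limits that we have already established in \cref{lim-sheafification}. The key observation is that a countable product is the limit of its sequence of finite partial products, and that finite products are a special case of finite limits — with which sheafification already commutes. First I would let $(Q_k)_{k \in \NN}$ be a sequence of multiplicative presheaves on $\cT$ whose product $\prod_{k} Q_k$ we wish to sheafify. For each $n \in \NN$, set $P_n \colonequals \prod_{k=0}^{n} Q_k$, the partial product of the first $n+1$ factors. Each $P_n$ is a finite product of multiplicative presheaves, hence itself multiplicative by \cref{evening} (finite products are limits). The evident projections $P_{n} \to P_{n-1}$ that forget the last factor make $(P_n)_{n \in \NN}$ into an $\NN$-indexed inverse system of multiplicative presheaves.

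Next I would identify the inverse limit of this system. Since $\lim_n P_n = \lim_n \prod_{k=0}^{n} Q_k \simeq \prod_{k \in \NN} Q_k$ in $\PShv(\cT)$ — an $\NN$-indexed product is canonically the limit of its finite partial products — we may apply \cref{lim-sheafification} to the system $(P_n)_{n \in \NN}$. This yields a natural isomorphism
\[ \Bigl(\prod_{k \in \NN} Q_k\Bigr)^\sharp \simeq (\lim_n P_n)^\sharp \xlongrightarrow{\sim} \lim_n P_n^\sharp. \]
It then remains to compute the right-hand side. Because sheafification is a left adjoint followed by a restriction, it commutes with finite limits and in particular with finite products, so $P_n^\sharp = \bigl(\prod_{k=0}^{n} Q_k\bigr)^\sharp \simeq \prod_{k=0}^{n} Q_k^\sharp$. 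The transition maps on sheafifications are again the projections, so $\lim_n P_n^\sharp \simeq \lim_n \prod_{k=0}^{n} Q_k^\sharp \simeq \prod_{k \in \NN} Q_k^\sharp$. Chaining these identifications shows that the natural map $\bigl(\prod_{k} Q_k\bigr)^\sharp \to \prod_{k} Q_k^\sharp$ is an isomorphism, as desired.

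The only point requiring care — and what I expect to be the main (if modest) obstacle — is the bookkeeping needed to check that the natural comparison map for the product agrees, under the identifications above, with the natural comparison map $\theta$ from \cref{lim-sheafification}; that is, that the isomorphism we produce is genuinely the canonical map and not merely some isomorphism. This is a routine compatibility of adjunction units with the two descriptions of the limit, and it follows because all the maps involved (the projections $P_n \to P_{n-1}$, the sheafification unit, and the universal maps into the limits and products) are the canonical ones and sheafification preserves the relevant finite products strictly up to the natural isomorphism. No repleteness is needed for this last step; repleteness enters only through the invocation of \cref{lim-sheafification}.
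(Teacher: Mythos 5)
Your proof is correct and follows exactly the same route as the paper, whose proof is a one-line remark that the claim ``follows formally since sheafification preserves finite limits and, by \cref{lim-sheafification}, sheafification preserves $\NN$-indexed inverse limits of multiplicative presheaves.'' Your write-up simply makes explicit the decomposition of the countable product as the limit of its finite partial products (each multiplicative by \cref{evening}) that the paper leaves implicit.
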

\begin{proof}
The assertion follows formally since sheafification preserves finite limits and by \cref{lim-sheafification}, sheafification preserves $\NN$-indexed inverse limits of multplicative presheaves.
\end{proof}
\begin{construction}\label{constlim1}
We will construct a version of the $\lim_n^{1}$ functor in the generality of group objects in a $1$-topos $\cX$.
More precisely, we construct a functor
\[ {\lim_n}^{1} \colon \Fun(\NN^{\op}, \Grp(\cX)) \to \cX_*, \]
where the target denotes the category of pointed objects on $\cX$.
To this end, let $(P_n)_{n \in \NN}$ be an inverse system of group objects in a $1$-topos $\cX$.
Let $f_{n+1} \colon P_{n+1} \to P_n$ denote the transition maps.
Let $G$ denote the group $\prod_{n} P_n$ and $Z$ denote the object of $\cX$ underlying $G$.
Let $\alpha_n \colon G \to P_{n}$ and $\beta_n \colon Z \to P_{n}$ denote the projection maps.
Let us denote the composite map
\[ G \times Z \xrightarrow[]{\pr_2} Z \xrightarrow[]{\alpha_{n+1}} P_{n+1} \xrightarrow{f_{n+1}} P_{n} \to P_{n} \]
by $\psi_n$.
Here, the last map $ P_{n} \to P_{n}$ is the inverse operation on the group object $P_{n}$.
Let us consider the maps
\[ G \times Z \xrightarrow{\alpha_n \times \beta_n \times \psi_n} P_{n} \times P_{n} \times P_{n} \to P_{n}, \]
where the last map is induced by the (associative) multiplication operation on $P_n$.
Let us denote the composition of the above maps by $\theta_n \colon G \times Z \to P_n$. Using the maps $\theta_n$ for all $n$, we obtain a morphism
\[ \theta \colon G \times Z \to Z, \]
which defines an action of the group object $G$ on $Z$. This gives a diagram in $\cX$ of the form 
\begin{equation}\label{orbits} \begin{tikzcd}
  G \times Z \ar[r,shift left=.75ex,"\theta"]
  \ar[r,shift right=.75ex,swap,"\pr_2"] &Z.
\end{tikzcd} \end{equation}
We define $\lim_{n}^1 P_n$ to be the coequalizer of the above diagram. Note that $Z$ is naturally a pointed object of $\cX$ (via the unit morphisms $* \to P_n$ for all $n$); therefore $\lim_{n}^1 P_n$ is also naturally pointed and is an object of $\cX_*$.
\end{construction} 
\begin{remark}
\Cref{constlim1} above extends the classical construction for inverse systems of groups (see \cite[\S~IX.2]{yellow}) to the generality of group objects in topoi.
When $\cX$ is replete and the $P_n$ are abelian, it recovers the usual construction of $\lim^1_n$ as the first derived functor of $\lim_n$.
Indeed, in this case, $\lim^1_n P_n$ is the cokernel of the map
\[ (\alpha_n - f_{n+1} \circ \alpha_{n+1})_n \colon \prod_n P_n \to \prod_n P_n \]
(\textit{cf.}\ the proof of \cite[Prop.~3.1.11]{proet}), which is the coequalizer of (\ref{orbits}) in the category of abelian groups.
But $\theta$ and $\pr_2$ admit the common section $Z \simeq \{ 0 \} \times Z \hookrightarrow G \times Z$, so the coequalizer is reflexive.
Since sheaves in abelian groups are algebras over the ``free abelian sheaf'' monad on sheaves of sets and this monads preserves reflexive coequalizers, this is also the coequalizer in the category of sets;
\textit{cf.}\ \cite[Prop.~3]{Lin}.
Note that this argument does not apply to arbitrary sheaves of groups $P_n$ because in general $\theta$ is not a map of groups, so it does not make sense to talk about the coequalizer of (\ref{orbits}) in the category of groups.
\end{remark}

\begin{remark}
We point out that even though the inclusion functor from the category of sheaves to the category of presheaves on a site preserves limits, it does not preserve $\lim_{n}^1$ in general.
\end{remark}
\begin{proposition}\label{lim-sheafification2}
Let $\cT$ be a Grothendieck site such that the associated $1$-topos $\Shv(\cT)$ is replete.
Let $(P_n)_{n \in \NN}$ be an inverse system of multiplicative presheaves of groups on $\cT$.
Then the natural map $(\lim_n^{1} P_n)^\sharp \to \lim_n^{1} P^\sharp_n$ is an isomorphism.
\end{proposition}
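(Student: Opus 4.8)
The plan is to exploit the explicit coequalizer description of $\lim^1_n$ from \cref{constlim1} together with the fact that sheafification commutes with the relevant products. Recall from \cref{constlim1} that, computing inside the presheaf topos $\PShv(\cT)$, the object $\lim^1_n P_n$ is the coequalizer of the pair
\[ G \times Z \rightrightarrows Z, \]
where $G = \prod_n P_n$ is the product group object, $Z$ is its underlying presheaf, one leg is $\pr_2$, and the other is the action $\theta$. Since sheafification $(-)^\sharp \colon \PShv(\cT) \to \Shv(\cT)$ is a left adjoint, it preserves all colimits, and in particular it carries this coequalizer to the coequalizer of $(G \times Z)^\sharp \rightrightarrows Z^\sharp$. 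As sheafification also preserves finite limits, we have $(G \times Z)^\sharp \simeq G^\sharp \times Z^\sharp$.

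First I would identify $G^\sharp$ and $Z^\sharp$. The underlying presheaf of $G$ is the countable product $Z = \prod_n P_n$ of the underlying multiplicative presheaves of the $P_n$, so \cref{product-sheafification} gives a canonical isomorphism $Z^\sharp \simeq \prod_n P_n^\sharp$ of sheaves of sets, and likewise $G^\sharp$ has underlying sheaf $\prod_n P_n^\sharp$. Because the multiplication, unit, and inverse of the product group object $G$ are assembled from the corresponding finitary operations of the $P_n$, and sheafification preserves finite products, these operations sheafify to the product group structure; concretely, the projections $\alpha_n \colon G \to P_n$ are group homomorphisms whose sheafifications $\alpha_n^\sharp$ are the projections of $\prod_n P_n^\sharp$. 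Hence $G^\sharp \simeq \prod_n P_n^\sharp$ as group objects in $\Shv(\cT)$, with $Z^\sharp$ its underlying object. In other words, $G^\sharp$ and $Z^\sharp$ are exactly the objects produced by \cref{constlim1} applied to the inverse system $(P_n^\sharp)_n$ of sheaves of groups in $\cX = \Shv(\cT)$.

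Next I would match the two parallel maps. The leg $\pr_2$ plainly sheafifies to the projection $G^\sharp \times Z^\sharp \to Z^\sharp$. For the other leg, the maps $\alpha_n$, $\beta_n$, $\psi_n$ and the multiplication maps out of which $\theta$ is built in \cref{constlim1} are all finitary, so---using that sheafification preserves finite products and the identifications above---$\theta^\sharp$ is identified with the action defining $\lim^1_n P_n^\sharp$. Therefore the sheafified coequalizer diagram coincides with the defining diagram of \cref{constlim1} for the system $(P_n^\sharp)_n$, whence $(\lim^1_n P_n)^\sharp \simeq \lim^1_n P_n^\sharp$, as desired.

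The essentially only nonformal input---and thus the main obstacle---is the identification $G^\sharp \simeq \prod_n P_n^\sharp$, i.e.\ the commutation of sheafification with the countable product defining $G$ and $Z$. This is precisely the content of \cref{product-sheafification} and is where repleteness of $\Shv(\cT)$ enters; everything else is a formal consequence of sheafification being a finite-limit-preserving left adjoint.
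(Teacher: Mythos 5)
Your proof is correct and follows exactly the paper's argument: the paper likewise deduces the statement formally from the coequalizer description in \cref{constlim1}, the fact that sheafification preserves colimits and finite limits, and \cref{product-sheafification} for the countable products. You have simply spelled out the details that the paper leaves implicit.
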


\begin{proof}
The assertion follows formally from \cref{constlim1}, the fact that sheafification preserves small colimits (in particular, all coequalizers) and the fact that sheafification of multiplicative presheaves preserves countable products (\cref{product-sheafification}).
\end{proof}

\section{Postnikov towers in replete topoi}

In this section, our goal is to prove \cref{mainthm}. We begin by summarizing some of the necessary background on the homotopy theory of $\infty$-topoi (following \cite[\S~6.5]{HTT}) that we will need and fix some notation along the way.

\subsection{Preliminaries on \texorpdfstring{$\infty$}{oo}-topoi}
Let $\cT$ be a small $\infty$-category. We denote $\PShv_\infty(\cT) \colonequals \Fun(\cT^\op,\Ani)$. In this paper, by an $\infty$-topos, we will mean an $\infty$-category $\cX$ for which there exists a small $\infty$-category $\mathcal{T}$ and an accessible left exact localization functor $\PShv_\infty(\cT) \to \cX$ (see \cite[Def.~6.1.0.4]{HTT}).
We refer to \cite{HTT} for a detailed presentation of the theory of $\infty$-topoi; in particular, see \cite[Sec.~6.1.1]{HTT} for an $\infty$-categorical analogue of Giraud’s axioms, which gives an intrinsic definition of an $\infty$-topos. 
Here, we will only discuss some examples and constructions that will be useful to us in \cref{BS}.
\begin{example}\label{example-anima}
The $\infty$-category $\Ani$ of spaces (also known as ``anima'') is an $\infty$-topos.
\end{example}
\begin{example}\label{example-presheaves}
Let $\cT$ be a small $\infty$-category.
Then $\PShv_\infty(\cT)$ is an $\infty$-topos, called the \emph{$\infty$-category of $\Ani$-valued presheaves or presheaves of spaces on $\cT$}.
\end{example}
\begin{example}
Let $\cT$ be a Grothendieck site. In this context, one can define the notion of $\Ani$-valued sheaves or sheaves of spaces on $\cT$ denoted as $\Shv_\infty(\cT)$. There is a natural inclusion functor $\Shv_\infty(\cT)  \hookrightarrow \PShv_\infty(\cT)$.
The $\infty$-category $\Shv_\infty(\cT)$ is an $\infty$-topos. The inclusion $\Shv_\infty(\cT) \hookrightarrow \PShv_\infty(\cT)$ admits a left adjoint, given by an accessible left exact localization functor $\PShv_\infty(\cT) \to \Shv_\infty(\cT)$, called \emph{sheafification}.
\end{example}
If $\cX$ is an $\infty$-topos, then so is the slice category $\cX_{/X}$ for any $X \in \cX$ \cite[Lem.~6.3.5.1]{HTT}.
An $\infty$-topos $\cX$ is a presentable $\infty$-category \cite[Prop.~5.5.4.15]{HTT} and thus admits all small limits and colimits \cite[Cor.~5.5.2.4, Def.~5.5.0.1]{HTT}.
In particular, it admits a final object $* \in \cX$. Moreover, for any presentable $\infty$-category $\cC$, there is a notion of $n$-truncated objects and $n$-truncation functors $\tau_{\le n}$ \cite[Prop.~5.5.6.18]{HTT}.
If $\cX$ is an $\infty$-topos, then the subcategory of discrete objects $\tau_{\le 0}(\cX)$ is a $1$-topos \cite[Rem.~6.4.1.3, Thm.~6.4.1.5]{HTT}.
For example, if $\cX = \Shv_\infty(\cT)$ for some Grothendieck site $\cT$, then $\tau_{\le 0}(\cX) \simeq \Shv(\cT)$.

The notion of truncated objects allows one to formulate the notion of Postnikov towers in any presentable $\infty$-category $\cC$.
The following definitions are taken from \cite[Def.~5.5.6.23]{HTT};
we include them here for the convenience of the reader.
Let $\ZZ_{\ge 0}^{\infty}$ be the linearly ordered set $\ZZ_{\ge 0} \cup \left \{ \infty \right \}$ with $\infty$ as the largest element.
The nerve $N(\ZZ_{\ge 0}^{\infty})$ of $\ZZ_{\ge 0}^{\infty}$ is an $\infty$-category.
\begin{definition}\label{conv1}
A \emph{Postnikov tower} in $\cC$ is a functor $X \colon N(\ZZ_{\ge 0}^{\infty})^{\op} \to \cC$ such that for each $n \ge 0$, the map $X(\infty) \to X(n)$ exhibits $X(n)$ as an $n$-truncation of $X(\infty)$.
A \emph{Postnikov pretower} is a functor $X \colon N(\ZZ_{\ge 0})^{\op} \to \cC$ such that for each $n \ge 0$, the map $X(n+1) \to X(n)$ exhibits $X(n)$ as an $n$-truncation of $X(n+1)$.
\end{definition}
Let $\Post^+ (\cC)$ denote the full subcategory of $\Fun(N(\ZZ_{\ge 0}^{\infty})^{\op}, \cC)$ spanned by the Postnikov towers. Let $\Post(\cC)$ denote the full subcategory of $\Fun(N(\ZZ_{\ge 0})^{\op}, \cC)$ spanned by the Postnikov pretowers.
\begin{definition}\label{defconv}
We say that $\cC$ is \emph{Postnikov complete} if the forgetful functor $\Post^+ (\cC) \to \Post(\cC)$ is an equivalence of $\infty$-categories.
\end{definition}
\begin{remark}
In \cite[Prop.~5.5.6.23]{HTT}, Lurie uses the phrase ``Postnikov towers in $\cC$ are convergent'' instead of ``$\cC$ is Postnikov complete.''
We follow the more recent convention of \cite[Def.~A.7.2.1]{SAG} (\textit{cf.}\ \cite[Rem.~5.5.6.23]{HTT}).
\end{remark}
The following lemma gives a useful restatement of the above definition, which we will use later.
\begin{lemma}[{\cite[Prop.~5.5.6.26]{HTT}}]\label{equivalentpost}
Let $\cC$ be a presentable $\infty$-category.
Then $\cC$ is Postnikov complete if and only if, for every functor $X \colon N(\ZZ^\infty_{\ge 0})^{\op} \to \cC$, the following conditions are equivalent:
\begin{enumerate}[label={\upshape{(\alph*)}}]
    \item\label{equivalentpost-tower} The diagram $X$ is a Postnikov tower.
    \item\label{equivalentpost-pretower} The diagram $X$ is a limit in $\cC$ and the restriction $\restr{X}{N(\ZZ_{\ge 0})^{\op}} \colon N(\ZZ_{\ge 0})^{\op} \to \cC$ is a Postnikov pretower.
\end{enumerate}
\end{lemma}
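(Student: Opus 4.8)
The plan is to reformulate both sides of the asserted equivalence in terms of two elementary pointwise conditions and then to match them. Throughout write $i \colon N(\ZZ_{\ge 0})^{\op} \hookrightarrow N(\ZZ_{\ge 0}^{\infty})^{\op}$ for the inclusion and $r = i^{*}$ for the associated restriction functor; by \cref{defconv}, $\cC$ is Postnikov complete precisely when $r \colon \Post^{+}(\cC) \to \Post(\cC)$ is an equivalence. I would first record the formal input. Since $\cC$ is presentable it admits all small limits, so $r$ has a right adjoint $\Ran_{i}$ given by right Kan extension; because $\infty$ is initial in $N(\ZZ_{\ge 0}^{\infty})^{\op}$ with relevant comma category $N(\ZZ_{\ge 0})^{\op}$, one has $(\Ran_{i} Y)(\infty) \simeq \lim_{n} Y(n)$, while $i^{*}\Ran_{i} \simeq \id$ as $i$ is fully faithful. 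Consequently a diagram $X \colon N(\ZZ_{\ge 0}^{\infty})^{\op} \to \cC$ is a limit diagram if and only if the natural map $X(\infty) \to \lim_{n} X(n)$ is an equivalence.

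Next I introduce two conditions: \emph{(I)} for every $Z \in \cC$ the natural map $Z \to \lim_{n} \tau_{\le n} Z$ is an equivalence; and \emph{(II)} for every Postnikov pretower $Y$ and every $n$, the natural map $\lim_{m} Y(m) \to Y(n)$ exhibits $Y(n)$ as an $n$-truncation of $\lim_{m} Y(m)$. The first step is to show that the statement ``(a) and (b) are equivalent for all $X$'' is equivalent to the conjunction of \emph{(I)} and \emph{(II)}. For this, note that if $X$ is a Postnikov tower then $X(n) \simeq \tau_{\le n} X(\infty)$, and from $\tau_{\le n}\tau_{\le n+1} \simeq \tau_{\le n}$ one sees that $i^{*}X$ is automatically a Postnikov pretower; hence the implication (a)$\Rightarrow$(b) amounts exactly to the statement that every Postnikov tower is a limit diagram, which—since every Postnikov tower is the tower $(\tau_{\le n}Z)_{n \le \infty}$ of its top object $Z = X(\infty)$—is condition \emph{(I)}. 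Conversely, applying (b)$\Rightarrow$(a) to $X = \Ran_{i}Y$ (a limit diagram restricting to a prescribed pretower $Y$) yields precisely condition \emph{(II)}, and \emph{(II)} in turn gives (b)$\Rightarrow$(a) for every $X$ satisfying (b), since there $X(\infty) \simeq \lim_{m}(i^{*}X)(m)$.

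It then remains to show that $\cC$ is Postnikov complete if and only if \emph{(I)} and \emph{(II)} hold, and here I would handle both directions at once. Using the essential uniqueness of Postnikov towers, evaluation at $\infty$ is an equivalence $\Post^{+}(\cC) \xrightarrow{\sim} \cC$, under which $r$ is identified with the functor $\tau_{\bullet} \colon \cC \to \Post(\cC)$, $Z \mapsto (\tau_{\le n}Z)_{n}$. The key computation is that, because each $Y(n)$ is $n$-truncated and $Z \to \tau_{\le n}Z$ is an $n$-truncation, the levelwise truncation adjunctions assemble into a natural equivalence $\Map_{\Post(\cC)}(\tau_{\bullet}Z, Y) \simeq \Map_{\cC}(Z, \lim_{m} Y(m))$; that is, $\tau_{\bullet}$ is left adjoint to $Y \mapsto \lim_{m} Y(m)$. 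Since a left adjoint is an equivalence exactly when the unit and counit of the adjunction are both equivalences, and here the unit at $Z$ is the map $Z \to \lim_{n}\tau_{\le n}Z$ of \emph{(I)} while the counit at $Y$ is the map $\tau_{\le n}\lim_{m}Y(m) \to Y(n)$ of \emph{(II)}, this gives ``$\cC$ Postnikov complete $\Leftrightarrow$ \emph{(I)} and \emph{(II)}''. Combined with the previous paragraph, this proves the lemma.

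I expect the main obstacle to be the two structural inputs invoked in the last paragraph: the identification $\Post^{+}(\cC) \simeq \cC$, and especially the adjunction $\tau_{\bullet} \dashv \lim$, whose verification requires checking that the pointwise truncation equivalences are compatible along the transition maps—equivalently, a small cofinality and end computation in the functor category $\Fun(N(\ZZ_{\ge 0})^{\op}, \cC)$. Once these are in place, both halves of the argument are formal manipulations with adjunctions and truncation functors.
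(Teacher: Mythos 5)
Your argument is correct and is essentially a reconstruction of Lurie's proof of \cite[Prop.~5.5.6.26]{HTT}, which is all the paper offers here: the lemma is stated with a citation and no proof is given in the paper itself. The two nonformal inputs you flag at the end --- that evaluation at $\infty$ gives an equivalence $\Post^+(\cC) \xrightarrow{\sim} \cC$ for presentable $\cC$ (existence and essential uniqueness of Postnikov towers), and the adjunction $\tau_\bullet \dashv \lim$ whose unit and counit are precisely your conditions (I) and (II) --- are indeed the only points requiring verification, and both are established in HTT, so the reduction of ``(a) $\Leftrightarrow$ (b) for all $X$'' and of Postnikov completeness to the conjunction of (I) and (II) closes the argument.
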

It is a classical result that the $\infty$-topos $\Ani$ is Postnikov complete.
However, this need not be true for a general $\infty$-topos.
A related property, which holds for the $\infty$-topos $\Ani$, but not for a general $\infty$-topos, is the notion of hypercompleteness.
We refer to \cite[\S~6.5]{HTT} for the notion of hypercompleteness, which will be briefly recalled below.
\begin{construction}[{\cite[\S~6.5.1]{HTT}}]\label{relativehomotopy}
Let $\cX$ be an $\infty$-topos and $X \in \cX$ be an object.
Let $S^n$ denote the $n$-sphere and let $* \in S^n$ be a fixed base point.
Evaluation at the base point and \cite[Rem.~5.5.2.6]{HTT} induces a map $s \colon X^{S^n} \to X$, which we identify with an object of $\cX_{/X}$. One can define
\[ \pi_n(X \to *) \colonequals \tau_{\le 0} s \in \cX_{/X}. \]

A similar definition applies to the relative context:
for a map $f \colon X \to Y$ in $\cX$ (viewed as an object of $\cX_{/Y}$), one can use the construction from the previous paragraph to define
\[ \pi_n(f) \in (\cX_{/Y})_{/f} \simeq \cX_{/X}. \]
\end{construction}
\begin{remark}
The projection $S^n \to *$ makes $\pi_n(X \to *)$ from \cref{relativehomotopy} naturally into a pointed object of $\tau_{\le 0} (\cX_{/X})$, which is a group object for $n>0$ and a commutative group object for $n>1$.
See the discussion after \cite[Lem.~ 6.5.1.2]{HTT}.
\end{remark}
\begin{remark}\label{why1}
In the context of \cref{relativehomotopy}, for a base point $x \colon * \to X$, the pullback of $\pi_n(X \to *)$ along $x$ yields a (discrete) pointed object of $\cX$, which we denote as $\pi_n(X, x)$ (or $\pi_n(X, *)$, when the data of the point is clear from the context). 
In the special case $\cX = \Ani$, this recovers the usual $n$-th homotopy group of the pointed space $(X,x)$ \cite[Rem.~6.5.1.6]{HTT}.
For $n = 0$, we define $\pi_0(X) \colonequals \tau_{\le 0}X \in \cX$.
Then $\pi_0(X \to *) \simeq (X \times \pi_0(X) \xrightarrow{\pr_1} X)$ in the category $\cX_{/X}$;
\textit{cf.}\ \cite[Rem.~6.5.1.3]{HTT}.
\end{remark}
\begin{remark}\label{why2}
Let $\cT$ be a Grothendieck site and $\cX \colonequals \Shv_{\infty} (\cT)$. For an object $X \in \cX$ equipped with a base point $x \colon * \to X$, one can also describe the (group) object $\pi_n(X, x)$ for $n > 0$ in the following alternative way:
we can think of $X$ as a pointed sheaf of spaces on $\cT$.
We can then naturally obtain a presheaf of sets denoted as $\pi_n^{\pre}(X, x)$ which sends an object $c \in \cT$ to $\pi_n (X(c), *)$.
For $n=0$, one can similarly define a presheaf $\pi^\pre_0(X)$ via $\cT \ni c \mapsto \pi_0(X(c))$.
By construction and \cite[5.5.6.28]{HTT}, it follows that $\pi_n (X, x)$ (resp.\ $\pi_0(X)$) as defined in \cref{why1} is simply the sheafification of $\pi_n^{\pre}(X, x)$ (resp.\ $\pi^\pre_0(X)$). For a version for unpointed objects, see also \cite[Rem.~6.5.1.4]{HTT} (applied to the geometric morphism $\Shv_\infty(\cT) \hookrightarrow  \PShv_\infty(\cT)$ and the canonical morphism $X \to *$).
\end{remark}
\begin{definition}[{\cite[Def.~6.5.1.10]{HTT}}]\label{infty-conn}
A morphism $f \colon X \to Y$ in an $\infty$-topos $\cX$ is $\infty$-connective if it is an effective epimorphism and $\pi_k(f) = *$ for $k \ge 0$.
For $0 \le n < \infty$, a morphism $f \colon X \to Y$ in $\cX$ is said to be $n$-connective if it is an effective epimorphism and $\pi_k(f)= *$ for $0 \le k <n$.
\end{definition}
\begin{lemma}\label{useful1} Let $\cX$ be an $\infty$-topos. Let $f \colon X \to Y$ be a map in $\cX$ such that $f$ is $(n+1)$-connective and $Y$ is $n$-truncated. Then the map $f$ is an $n$-truncation.  
\end{lemma}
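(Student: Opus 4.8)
The plan is to reduce the statement to the claim that a single comparison map is an equivalence, and then to verify that this comparison map is simultaneously highly connective and truncated. Since $Y$ is $n$-truncated, the universal property of the $n$-truncation functor $\tau_{\le n}$ supplies an essentially unique factorization $f \simeq g \circ \eta$, where $\eta \colon X \to \tau_{\le n} X$ is the canonical truncation map and $g \colon \tau_{\le n} X \to Y$. To prove that $f$ is an $n$-truncation, it then suffices to prove that $g$ is an equivalence, for then $f$ inherits from $\eta$ the universal property characterizing the $n$-truncation.

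I would establish that $g$ is an equivalence by placing it in the intersection of the two classes of the orthogonal factorization system on $\cX$ given by the $(n+1)$-connective and the $n$-truncated morphisms (see \cite[\S~5.5.6, \S~6.5.1]{HTT}). Three facts combine to give this. First, the truncation map $\eta$ is itself $(n+1)$-connective in the sense of \cref{infty-conn}: the $(n+1)$-connective/$n$-truncated factorization of the terminal map $X \to *$ is precisely $X \to \tau_{\le n} X \to *$, which identifies $\eta$ with the $(n+1)$-connective half. Second, since both $f \simeq g \circ \eta$ and $\eta$ are $(n+1)$-connective, a cancellation argument (below) forces $g$ to be $(n+1)$-connective as well. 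Third, $g$ is a morphism between the $n$-truncated objects $\tau_{\le n} X$ and $Y$, hence is an $n$-truncated morphism by \cite[Lem.~5.5.6.14]{HTT}. Being in both classes of an orthogonal factorization system, $g$ is an equivalence.

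The step that requires the most care is the cancellation, namely deducing the $(n+1)$-connectivity of $g$ from that of $f$ and $\eta$; I would argue it formally from orthogonality. Given any $n$-truncated morphism $r \colon C \to D$ together with a commutative square with left edge $g$, top edge $a$, bottom edge $b$, and right edge $r$, precompose the square with $\eta$ to obtain a square with left edge $f$; since $f$ is $(n+1)$-connective and $r$ is $n$-truncated, there is a unique lift $\ell \colon Y \to C$ with $\ell \circ f \simeq a \circ \eta$ and $r \circ \ell \simeq b$. One then checks that $\ell \circ g$ and $a$ agree after precomposition with the $(n+1)$-connective map $\eta$ and after postcomposition with $r$, so by the uniqueness of lifts of $\eta$ against $r$ they coincide; thus $\ell$ also solves the original square, showing $g \perp r$ for all $n$-truncated $r$, i.e.\ $g$ is $(n+1)$-connective. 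As a sanity check, the final ``intersection equals equivalences'' step can also be seen directly on fibers: a $(n+1)$-connective morphism has $n$-connected fibers while an $n$-truncated morphism has $n$-truncated fibers, so the fibers of $g$ are contractible and $g$ is an equivalence.
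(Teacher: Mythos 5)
Your proposal is correct and follows essentially the same route as the paper: factor $f$ through the canonical map $X \to \tau_{\le n}X$, use right-cancellation of $(n+1)$-connectivity to see that the induced map $\tau_{\le n}X \to Y$ is $(n+1)$-connective, and conclude it is an equivalence because it is also $n$-truncated. The only cosmetic differences are that the paper verifies the $(n+1)$-connectivity of $X \to \tau_{\le n}X$ via the long exact sequence of homotopy objects and cites \cite[Prop.~6.5.1.16, Prop.~6.5.1.12]{HTT} for the cancellation and the final step, whereas you rederive these from the orthogonal factorization system.
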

\begin{proof}
Let us consider the map $p \colon X \to \tau_{\le n}X$. By the sequence of pointed objects in \cite[Rmk.~6.5.1.5]{HTT} applied to the maps $X \xrightarrow{p} \tau_{\le n} X \to *$ along with \cite[Rmk.~6.5.1.7]{HTT} and \cite[Lem.~6.5.1.9]{HTT}, it follows that $\pi_k(p) = *$ for $k \le n$ and $\pi_k(p) = \pi_k(X \to *)$ for $k>n$. This implies that the map $p$ is $(n+1)$-connective. Since $Y$ is $n$-truncated, the map $f$ factors through $p$, i.e., there is a map $q \colon \tau_{\le n} X \to Y$ such that there is a diagram
\[ \begin{tikzcd}
                                 & \tau_{\le n}X \arrow[rd, "q"] &   \\
X \arrow[ru, "p"] \arrow[rr,"f"] &                               & Y
\end{tikzcd} \]
in $\cX$.
By \cite[Prop.~6.5.1.16]{HTT}, it follows that $q$ is $(n+1)$-connective.
But since $\tau_{\le n}X$ and $Y$ are both $n$-truncated, $q$ is also $n$-truncated.
Therefore, $q$ is an equivalence \cite[Prop.~6.5.1.12]{HTT}. This finishes the proof.
\end{proof}

\begin{definition}
An $\infty$-topos $\cX$ is called \emph{hypercomplete} if every $\infty$-connective morphism in $\cX$ is an equivalence.
\end{definition}

\begin{remark}\label{hypercomplete1}
Every $\infty$-topos $\cX$ admits a hypercompletion denoted by $\cX^{\wedge}$, which may be characterized by a universal property \cite[Prop.~6.5.2.13]{HTT}.
\end{remark}

\begin{remark}\label{darm1}
An object of an $\infty$-topos $\cX$ is said to be hypercomplete if it is local with respect to the $\infty$-connective morphisms. It follows that $\cX$ is hypercomplete if and only if every object of $\cX$ is hypercomplete.
Any $n$-truncated object of $\cX$ is hypercomplete \cite[Lem.~6.5.2.9]{HTT}.
As a consequence, it follows that if $\cX$ is Postnikov complete, then $\cX$ is hypercomplete.
\end{remark}
It is however much more subtle to give a criterion for Postnikov completeness.
In general, the hypercompleteness of $\cX$ does not imply Postnikov completeness of $\cX$.
To illustrate this, we mention the following example due to Morel--Voevodsky (which we learned from Marc Hoyois);
see also \cite[Warn.~3.11.13]{BGH}.
\begin{example}[{\cite[Ex.~1.30]{MV}}]\label{exam}
Let $G= \prod^\infty_{i=1} \ZZ/2$.
Let $\cT$ be the site of finite $G$-sets and let us consider the $\infty$-topos $\cX \colonequals \Shv_{\infty}(\cT)^{\wedge}$ given by the hypercompletion of $\Shv_{\infty}(\cT)$ (see the discussion before \cite[Lem.~6.5.2.12]{HTT}).
Let $F \in \cX$ be the hypersheafification of the presheaf $\prod^\infty_{i=1} K(\ZZ/2, i)$.
Since hypersheafification preserves truncations \cite[Prop.~5.5.6.28]{HTT}, $\pi_0(F) = \tau_{\le 0}(F)$ is trivial. 
On the other hand, hypersheafification preserves finite limits, so by \cite[Prop.~5.5.6.28, Lem.~6.5.1.2]{HTT} $\lim_{n} \tau_{\le n} F \simeq \prod^\infty_{i=1} K(\ZZ/2, i)^\sharp$, where $K(\ZZ/2, i)^\sharp$ is the (hyper)sheafification of $K(\ZZ/2, i)$.
Thus, $\pi_0 (\lim_{n}\tau_{\le n} F)$ is the sheaf associated with the presheaf
\[ U \mapsto \prod^\infty_{i=1} H^i (U, \ZZ/2) \]
for $U \in \cT$.
When $U = *$ is the final object of $T, $ the element in the latter group obtained taking the products of the pullbacks of $\tau^i \in H^i (\ZZ/2,\ZZ/2)$ along the $i$-th projection $G \to \ZZ/2$ for a generator $\tau \in H^1(\ZZ/2,\ZZ/2)$ gives an element that is not killed by any finite cover.
Therefore, $\pi_0 (\lim_{n} \tau_{\le n}F)$ is nontrivial, showing that Postnikov towers do not converge in the hypercomplete $\infty$-topos $\cX$.
We point out that the cohomology ring $H^* (\ZZ/2, \ZZ/2)$ is a symmetric algebra on $H^1(\ZZ/2, \ZZ/2)$, as already noted in \cref{replete-not-finite}.
\end{example}
Nonetheless, the characterization of Postnikov completeness from \cref{equivalentpost} simplifies for a hypercomplete $\infty$-topos, which will be used later. We thank the referee for pointing this out to us.
\begin{lemma}\label{referee}
    Let $\cX$ be a hypercomplete $\infty$-topos.
    Suppose that whenever $X \colon N(\ZZ^\infty_{\ge 0})^\op \to \cX$ is a limit diagram whose restriction $\restr{X}{N(\ZZ_{\ge 0})^\op} \colon N(\ZZ_{\ge 0})^\op \to \cX$ is a Postnikov pretower, then $X$ is a Postnikov tower in the sense of \cref{conv1}.
    Then $\cX$ is Postnikov complete.
\end{lemma}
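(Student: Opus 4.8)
The plan is to verify the criterion of \cref{equivalentpost}: since $\cX$ is presentable, it is Postnikov complete exactly when conditions \ref{equivalentpost-tower} and \ref{equivalentpost-pretower} are equivalent for every $X \colon N(\ZZ_{\ge 0}^{\infty})^{\op} \to \cX$. The standing assumption is precisely the implication \ref{equivalentpost-pretower}$\Rightarrow$\ref{equivalentpost-tower} (``a limit diagram whose restriction is a Postnikov pretower is a Postnikov tower''), so the entire task is to prove the converse \ref{equivalentpost-tower}$\Rightarrow$\ref{equivalentpost-pretower}. I will split \ref{equivalentpost-pretower} into its two clauses: that $\restr{X}{N(\ZZ_{\ge 0})^{\op}}$ is a Postnikov pretower, which is formal, and that $X$ is a limit diagram, which is the real content and is where both the hypothesis and hypercompleteness enter.

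So assume $X$ is a Postnikov tower and write $X_\infty \colonequals X(\infty)$ and $X_n \colonequals X(n) \simeq \tau_{\le n} X_\infty$. The pretower clause is immediate: the transition map $X_{n+1} \to X_n$ is the induced map $\tau_{\le n+1} X_\infty \to \tau_{\le n} X_\infty$, and since every $n$-truncated object is $(n+1)$-truncated we have $\tau_{\le n} \tau_{\le n+1} \simeq \tau_{\le n}$, so this map exhibits $X_n$ as $\tau_{\le n} X_{n+1}$. It remains to show that the canonical comparison $\phi \colon X_\infty \to \lim_n X_n$ is an equivalence.

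Here I would feed the hypothesis an auxiliary diagram. Let $\tilde X \colon N(\ZZ_{\ge 0}^{\infty})^{\op} \to \cX$ agree with $X$ on $N(\ZZ_{\ge 0})^{\op}$ but take $\tilde X(\infty) \colonequals \lim_n X_n$, equipped with its tautological limit cone. Then $\tilde X$ is a limit diagram whose restriction is the same Postnikov pretower $(X_n)$, so by hypothesis $\tilde X$ is a Postnikov tower; in particular the projection $\mathrm{pr}_n \colon \lim_n X_n \to X_n$ exhibits $X_n$ as $\tau_{\le n}(\lim_n X_n)$. Now $\phi$ is characterized by $\mathrm{pr}_n \circ \phi \simeq (X_\infty \to X_n)$ for every $n$, where the right-hand side is the $n$-truncation map. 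Applying the functor $\tau_{\le n}$ and using that it sends both the truncation map $X_\infty \to X_n$ and the truncation map $\mathrm{pr}_n$ to equivalences, I conclude that $\tau_{\le n}(\phi)$ is an equivalence for every $n \ge 0$.

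The last step converts this into an equivalence. A map inducing an equivalence on $\tau_{\le n}$ for all $n$ is $\infty$-connective: by \cite[Prop.~5.5.6.28]{HTT} the homotopy sheaves factor through truncation, $\pi_k(-) \simeq \pi_k \tau_{\le k}(-)$, so $\phi$ induces isomorphisms on all $\pi_k$, and since $\tau_{\le 0}(\phi)$ is an equivalence it is in particular an effective epimorphism; a routine comparison of the homotopy-sheaf long exact sequences then yields $\pi_k(\phi) = *$ for all $k$, i.e.\ $\infty$-connectivity in the sense of \cref{infty-conn}. Because $\cX$ is hypercomplete, $\phi$ is therefore an equivalence, so $X$ is a limit diagram and \ref{equivalentpost-pretower} holds. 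I expect the main obstacle to be the identification $\tau_{\le n}(\lim_n X_n) \simeq X_n$: in a general hypercomplete topos the Postnikov stages of $\lim_n X_n$ need not recover the $X_n$ — this is exactly the failure exhibited in \cref{exam} — so hypercompleteness alone is insufficient, and the essential maneuver is to extract this identification by applying the hypothesis to the auxiliary tower $\tilde X$.
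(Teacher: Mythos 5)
Your proposal is correct and follows essentially the same route as the paper: both construct the auxiliary limit diagram over the same pretower (your $\tilde X$ is the paper's $X'$), apply the hypothesis to see that its projections are truncations, deduce that the comparison map $X(\infty) \to \lim_n X(n)$ induces equivalences on all truncations and is hence $\infty$-connective, and conclude by hypercompleteness. The only difference is that you spell out the (formal) pretower clause and the truncation-to-connectivity step in slightly more detail than the paper does.
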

\begin{proof}
    By \cref{equivalentpost}, we only need to check that any Postnikov tower $X \colon N(\ZZ^\infty_{\ge 0})^\op \to \cX$ is a limit diagram in $\cX$.
    Let $X' \colon N(\ZZ^\infty_{\ge 0})^\op \to \cX$ be a limit diagram for $\restr{X}{N(\ZZ_{\ge 0})^\op}$ and $X \to X'$ be the natural map induced from the universal property of limits.
    By our hypothesis in the statement, $X'$ is a Postnikov tower.
    In particular, the map $X(\infty) \to X'(\infty)$ induces an isomorphism on $n$-truncations for all $n$.
    This implies that $X(\infty) \to X'(\infty)$ is $\infty$-connective. Since $\mathcal{X}$ is hypercomplete, the latter map must be an equivalence, which implies that $X$ is a limit diagram. 
    This finishes the proof.
\end{proof}
\begin{remark}\label{finiteness}
Let us mention some known criteria for an $\infty$-topos $\cX$ to be Postnikov complete.
If $\cX$ is locally of homotopy dimension $\le n$, then $\cX$ is Postnikov complete \cite[Prop.~7.2.1.10]{HTT}.
Another criterion for Postnikov completeness appears in \cite[Lem.~3.4]{Jar} (see also \cite[Prop.~1.2.2]{Toe}), which relies on certain cohomology vanishing assumptions above a fixed degree.
We point out that the notion of finite cohomological dimension \cite[Def.~7.2.2.18]{HTT} is weaker that finite homotopy dimension, i.e., an $\infty$-topos with homotopy dimension $\le n$ also has cohomological dimension $\le n$ \cite[Cor.~7.2.2.30]{HTT}. 
\end{remark}

\subsection{The question of Bhatt--Scholze}\label{BS}

As recorded in \cref{finiteness}, all the criteria for Postnikov completeness discussed before rely on making certain finiteness assumptions.
However, in many cases, these finiteness assumptions do not hold.
In this subsection, we will prove \cref{mainthm} answering \cref{quesintro} of Bhatt and Scholze, which does not make any such finiteness assumption.

\begin{remark}\label{cry}
As discussed in \cref{darm1}, the hypercompleteness assumption in \cref{quesintro} is necessary and cannot be removed; in fact, as we will show in \cref{counterexample}, the $\infty$-topos of sheaves of spaces on a replete topos need not be hypercomplete.
Moreover, \cref{exam} demonstrates that hypercomplete $\infty$-topoi are in general not Postnikov complete.
\end{remark}
In order to prove \cref{mainthm}, we will use the notion of multiplicative presheaves (\cref{subca}) and also some additional preparations. An important observation is the following proposition which shows that certain naturally occuring presheaves are multiplicative.

\begin{proposition}\label{cocacola}
Let $\cT$ be a Grothendieck site and $F$ be a pointed object of $\Shv_{\infty}(\cT)$.
Then $\pi^{\pre}_n (F, *)$ (as defined in \cref{why2}) is a multiplicative presheaf.
\end{proposition}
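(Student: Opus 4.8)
The plan is to exhibit $\pi^{\pre}_n(F, *)$ in the form $P' \circ h^\sharp$ for a suitable product-preserving functor $P' \colon \Shv(\cT)^\op \to \Set$, and then to invoke \cref{local-objects}. Concretely, for a $0$-truncated object $S \in \Shv(\cT) \simeq \tau_{\le 0}(\Shv_\infty(\cT))$, regarded inside $\Shv_\infty(\cT)$, the pointing $* \to F$ equips the mapping anima $\Map_{\Shv_\infty(\cT)}(S, F)$ with a canonical base point (the constant map through $*$), and I would set
\[ P'(S) \colonequals \pi_n\bigl(\Map_{\Shv_\infty(\cT)}(S, F), *\bigr). \]
This is manifestly contravariantly functorial in $S$, so it defines a functor $P' \colon \Shv(\cT)^\op \to \Set$.

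To identify $P' \circ h^\sharp$ with $\pi^{\pre}_n(F, *)$, I would use that $F$ is a sheaf: the $\infty$-categorical Yoneda lemma together with the adjunction between sheafification and the inclusion $\Shv_\infty(\cT) \hookrightarrow \PShv_\infty(\cT)$ gives $\Map_{\Shv_\infty(\cT)}(h^\sharp_c, F) \simeq F(c)$, whence $P'(h^\sharp_c) = \pi_n(F(c), *) = \pi^{\pre}_n(F, *)(c)$ by the definition recalled in \cref{why2}. It then remains to check that $P'$ preserves small products, i.e.\ carries coproducts in $\Shv(\cT)$ to products in $\Set$. Given a family $\{S_j\}_{j \in J}$ in $\Shv(\cT)$, the crucial point is that the coproduct $\coprod_{j} S_j$ formed in the $1$-topos $\Shv(\cT)$ agrees with the coproduct formed in $\Shv_\infty(\cT)$: the latter is the sheafification of the levelwise disjoint union $c \mapsto \coprod_j S_j(c)$, which is a discrete presheaf, and since sheafification commutes with $\tau_{\le 0}$ \cite[Prop.~5.5.6.28]{HTT}, its $\infty$-categorical sheafification is again $0$-truncated and coincides with the $1$-categorical sheafification, i.e.\ with $\coprod_j S_j$ in $\Shv(\cT)$. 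Granting this, mapping out of a coproduct turns into a product of mapping anima, $\Map_{\Shv_\infty(\cT)}(\coprod_j S_j, F) \simeq \prod_j \Map_{\Shv_\infty(\cT)}(S_j, F)$, and since $\pi_n$ of a product of pointed spaces is the product of the $\pi_n$'s, I obtain $P'(\coprod_j S_j) \simeq \prod_j P'(S_j)$.

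With $P'$ shown to be product-preserving and $\pi^{\pre}_n(F, *) = P' \circ h^\sharp$, \cref{local-objects} immediately yields that $\pi^{\pre}_n(F, *)$ is multiplicative. The step I expect to require the most care is the compatibility of coproducts under the inclusion $\Shv(\cT) \hookrightarrow \Shv_\infty(\cT)$: since coproducts in $\Shv_\infty(\cT)$ are computed by sheafifying the presheaf-level coproduct, one must be certain that no higher homotopy is introduced, and this is precisely where the invariance of truncation under sheafification is used (it is also what licenses mapping the coproduct into the untruncated object $F$). Everything else — the Yoneda identification, the behaviour of mapping anima on coproducts, and the compatibility of $\pi_n$ with products — is formal. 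Finally, the same argument applies verbatim in the case $n = 0$, taking $P'(S) = \pi_0\bigl(\Map_{\Shv_\infty(\cT)}(S, F)\bigr)$, for which no base point is needed.
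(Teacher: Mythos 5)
Your proof is correct and follows essentially the same route as the paper: both exhibit $\pi^{\pre}_n(F,*)$ as $P' \circ h^\sharp$ for the product-preserving functor $S \mapsto \pi_n\bigl(\Map_{\Shv_\infty(\cT)}(S,F),*\bigr)$ on $\Shv(\cT)^\op$ and then invoke \cref{local-objects}. The only difference is one of detail: the paper simply asserts that the inclusion $\Shv(\cT) \to \Shv_\infty(\cT)$ preserves coproducts, whereas you justify this via the compatibility of sheafification with $\tau_{\le 0}$, which is a correct elaboration of the same step.
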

\begin{proof}
Note that $F$ represents a limit preserving functor again denoted as 
\[ {F} \colon \Shv_{\infty}(\cT)^\op \to  \Ani. \]
The natural inclusion functor $\Shv(\cT) \to \Shv_{\infty}(\cT)$ preserves coproducts.
Therefore, by restriction, we obtain a product preserving functor
\[ F' \colon \Shv(\cT)^\op \to  \Ani. \]
Since $F$ is a pointed object, so is $F'$.
Taking the associated homotopy groups, we obtain a product preserving functor
\[ \pi_n^{\pre} (F',*) \colon \Shv(\cT)^\op \to \Set. \]
By construction, it follows that $\pi_n^{\pre} (F, *)$ is naturally isomorphic to $\pi_n^{\pre} (F') \circ h^\sharp$, where $h^\sharp \colon \cT \to \Shv(\cT)$ is the map from \cref{leftkan}.
By \cref{local-objects}, we see that $\pi_n^{\pre} (F, *)$ is multiplicative, as desired.
\end{proof}
\begin{proposition}[Milnor sequences in a replete topos]\label{milreplete}
Let $\cX$ be a $1$-topos.
Assume that $\cX$ is replete.
Let us consider the $\infty$-topos $\Shv_{\infty}(\cX)$.
Let $(F_{n})_{n \in \NN}$ be an inverse system of pointed objects of $\Shv_{\infty} (\cX)$. Then we have the following exact sequence (of group objects for $q \ge 1$, of pointed objects for $q=0$)\footnote{
A sequence of pointed objects $\dotsb \xrightarrow{\alpha_{i+1}} (G_i,*) \xrightarrow{\alpha_i} (G_{i-1},*) \xrightarrow{\alpha_{i-1}} \dotsb$ is exact if $\im(\alpha_{i+1}) = \alpha^{-1}_i(*)$ for all $i$.}
in $\cX$:
\begin{equation}\label{milnor}
    * \to {\lim}^{1} \pi_{q+1}(F_n, *) \to \pi_q(\lim F_n, *) \to \lim \pi_q(F_n, *) \to *.
\end{equation}
\end{proposition}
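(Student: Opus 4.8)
The plan is to construct the sequence first at the level of presheaves on $\cX$, where every term is computed objectwise and the statement reduces to the classical Milnor sequence, and then to transport it to $\cX$ by sheafification, using the results of the previous section to identify the sheafifications of the three nontrivial terms; throughout I would fix a small site $\cT$ with $\Shv(\cT) \simeq \cX$ so that the propositions on multiplicative presheaves apply verbatim. First I would note that, since $\Shv_\infty(\cX) \hookrightarrow \PShv_\infty(\cX)$ preserves limits and each $F_n$ is a sheaf, the object $\lim_n F_n$ is the limit of the $F_n$ in presheaves, which is objectwise: $(\lim_n F_n)(c) \simeq \lim_n F_n(c)$ for $c \in \cT$. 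For each such $c$ the tower of pointed anima $(F_n(c))_n$ carries the classical Milnor exact sequence relating $\pi_q$ of the limit, $\lim_n \pi_q$, and $\lim^1_n \pi_{q+1}$ (cf.\ \cite[\S~IX.3]{yellow}). Because products and coequalizers of presheaves are objectwise, the functor of \cref{constlim1} computed in $\PShv(\cX)$ is also objectwise and reproduces the classical $\lim^1$ of the towers of groups $c \mapsto \pi_{q+1}(F_n(c),*)$. Assembling these over all $c$ — and writing $\pi^{\pre}_q$ for the presheaf homotopy groups of \cref{why2}, with $\lim_n$ and $\lim^1_n$ computed in $\PShv(\cX)$ — I obtain an exact sequence of presheaves
\[ * \to {\lim^1_n} \pi^{\pre}_{q+1}(F_n,*) \to \pi^{\pre}_q(\lim_n F_n,*) \to \lim_n \pi^{\pre}_q(F_n,*) \to *. \]

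Next I would sheafify and identify the terms. By \cref{why2}, sheafifying $\pi^{\pre}_q(\lim_n F_n,*)$ and $\pi^{\pre}_q(F_n,*)$ yields $\pi_q(\lim_n F_n,*)$ and $\pi_q(F_n,*)$. By \cref{cocacola} each $\pi^{\pre}_q(F_n,*)$ is multiplicative, so \cref{lim-sheafification} gives $\bigl(\lim_n \pi^{\pre}_q(F_n,*)\bigr)^\sharp \simeq \lim_n \pi_q(F_n,*)$ and \cref{lim-sheafification2} gives $\bigl(\lim^1_n \pi^{\pre}_{q+1}(F_n,*)\bigr)^\sharp \simeq \lim^1_n \pi_{q+1}(F_n,*)$; this is the single place where repleteness of $\cX$ is used. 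Since sheafification $\PShv(\cX) \to \cX$ is exact — it preserves finite limits and all colimits, hence fibers over the basepoint and effective epimorphisms of pointed (resp.\ group) objects — it carries the presheaf sequence above to an exact sequence in $\cX$, which under these identifications is exactly \cref{milnor}.

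I expect no real obstacle in this final formal step: the genuine difficulty has already been absorbed into the previous section. The crux is precisely that sheafification does \emph{not} commute with $\lim_n$ or $\lim^1_n$ in general (cf.\ the discussion in the introduction), and the multiplicative-presheaf technology of \cref{lim-sheafification} and \cref{lim-sheafification2} is exactly what repairs this for the homotopy-group presheaves at hand. Beyond invoking those results, what remains is only the bookkeeping of the boundary map and the verification that the whole sequence is natural and compatible for all $q$, which is routine.
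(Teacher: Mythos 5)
Your proposal is correct and follows essentially the same route as the paper's own proof: apply the objectwise (classical) Milnor sequence to get an exact sequence of presheaves, observe via \cref{cocacola} that the presheaf homotopy groups are multiplicative, and then sheafify using \cref{lim-sheafification} and \cref{lim-sheafification2} together with exactness of sheafification. The only difference is that you spell out a few routine verifications (objectwise limits, exactness of sheafification) that the paper leaves implicit.
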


\begin{proof}
Let us explain how to construct such a sequence. We may pick a Grothendieck site $\cT$ such that $\Shv(\cT) \simeq \cX$. Then the objects $F_n$ maybe viewed as pointed sheaves of spaces on $\cT$. By the usual Milnor sequence for the $\infty$-category $\Ani$ \cite[Thm.~IX.3.1]{yellow}, we have the following exact sequece (of presheaves of groups for $q \ge 1$, of pointed presheaves of sets for $q=0$):
\begin{equation}\label{d}
     * \to {\lim}^{1} \pi_{q+1}^{\pre}(F_n, *) \to \pi_q^{\pre}(\lim F_n, *) \to \lim \pi_q^{\pre}(F_n, *) \to *.
\end{equation}
Here, for any pointed (pre)sheaf of spaces $F$ on $\cT$, the set-valued presheaf $\pi_q^{\pre}(F,*)$ denotes the functor $\cT \ni U \mapsto \pi_q(F(U),*)$.
If $F$ is a sheaf, then by \cref{why2}, $\pi_q(F, *)$ is simply the sheafification of $\pi_q^{\pre}(F, *)$.
Moreover, by \cref{cocacola}, we know that since $F$ is a sheaf, the presheaf $\pi_q^{\pre}(F, *)$ is a multiplicative presheaf (\cref{subca}). Therefore, we obtain the desired Milnor sequence \cref{milnor} by sheafifying \cref{d} and using \cref{lim-sheafification} and \cref{lim-sheafification2}.
\end{proof}
\begin{remark}\label{toeremark}
In \cite[p.~21]{Toe}, it is mentioned that in general there is no Milnor sequence for sheaves on a site, because sheafification does not commute with inverse limits in general. 
However, since \cite{Toe} uses the fpqc site, which is replete, sheafification commutes with ``enough'' inverse limits (\cref{lim-sheafification}) and in fact one always has a Milnor sequence in this context.
\end{remark}
Finally, we will need the following lemma.
\begin{lemma}\label{infcon}
Let $\cT$ be a Grothendieck site. Let $f \colon F \to F'$ be a map of sheaves of spaces on $\cT$. Let $0 \le n \le \infty$. Suppose that the following two conditions hold.
\begin{enumerate}[label=\upshape{(\arabic*)}]
    \item\label{infcon-surj} The map $\pi_0 (F) \to \pi_0(F')$ is a surjection of sheaves on $\cT$.
    \item\label{infcon-pi} For every object $U \in \cT$ and every morphism $* \to \restr{F}{\cT_{/U}}$ in $\cT_{/U}$, the induced map on homotopy groups
    \[ \pi_m(f,*) \colon \pi_m \bigl(\restr{F}{\cT_{/U}}, *\bigr) \to \pi_m \bigl(\restr{F'}{\cT_{/U}}, *\bigr) \]
    is a surjection for $m=n$, an isomorphism for $0<m<n$ and a map with trivial kernel (as pointed objects) for $m=0$.
\end{enumerate}
Then the map $f$ is $n$-connective.
\end{lemma}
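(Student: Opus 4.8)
The plan is to verify the two defining properties of an $n$-connective morphism from \cref{infty-conn} separately: that $f$ is an effective epimorphism, and that the relative homotopy objects $\pi_k(f) \in \cX_{/F}$ from \cref{relativehomotopy} (where $\cX \colonequals \Shv_\infty(\cT)$) vanish for $0 \le k < n$. The effective epimorphism property is immediate from \ref{infcon-surj}: it is standard (see, e.g., \cite[\S~7.2.1]{HTT}) that a morphism in an $\infty$-topos is an effective epimorphism precisely when the induced map on $0$-truncations is one in the underlying $1$-topos, and here $\tau_{\le 0} f$ is the map $\pi_0(F) \to \pi_0(F')$, a surjection of sheaves by hypothesis. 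In particular, this already settles the case $n = 0$.

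For the higher degrees I would argue locally over the site. Each $\pi_k(f)$ is a $0$-truncated object of $\cX_{/F}$, so to show that it is terminal it suffices to check that its structure map $\pi_k(f) \to F$ becomes an equivalence after pullback along some effective epimorphism onto $F$, since by descent \cite[\S~6.2.3]{HTT} pullback along an effective epimorphism is conservative. As a cover I would take the canonical map $\coprod_{(U,s)} h^\sharp_U \to F$, where $(U,s)$ ranges over all objects $U \in \cT$ together with all sections $s \colon h^\sharp_U \to F$; it is surjective on $\pi_0$ and hence an effective epimorphism. Under the identification $\cX_{/h^\sharp_U} \simeq \Shv_\infty(\cT_{/U})$, pulling $\pi_k(f)$ back along a single section $s$ corresponds to the relative homotopy object of $\restr{f}{\cT_{/U}} \colon \restr{F}{\cT_{/U}} \to \restr{F'}{\cT_{/U}}$ based at the point $s \colon * \to \restr{F}{\cT_{/U}}$; this base-change compatibility holds because $\pi_k(f)$ is built from iterated loop spaces (finite limits) and a $0$-truncation, all of which are preserved by the left exact left adjoint base-change functors (\cite[Prop.~5.5.6.28]{HTT}). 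I am thereby reduced to showing that these based relative homotopy objects vanish for all $U$ and all $s$, which is exactly the situation addressed by hypothesis \ref{infcon-pi}.

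To carry out this local computation I would invoke the long exact sequence of homotopy sheaves \cite[Rem.~6.5.1.5]{HTT} associated with the pointed map $\restr{f}{\cT_{/U}}$, just as in the proof of \cref{useful1}. Identifying the based relative homotopy object with the homotopy of the fiber, it fits into an exact sequence of pointed sheaves on $\cT_{/U}$
\[ \dotsb \to \pi_{k+1}\bigl(\restr{F}{\cT_{/U}}\bigr) \to \pi_{k+1}\bigl(\restr{F'}{\cT_{/U}}\bigr) \to \pi_k\bigl(\restr{f}{\cT_{/U}}\bigr) \to \pi_k\bigl(\restr{F}{\cT_{/U}}\bigr) \to \pi_k\bigl(\restr{F'}{\cT_{/U}}\bigr) \to \dotsb \]
(base points suppressed, the middle term being the pullback of $\pi_k(f)$ along $s$). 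For $0 \le k < n$, hypothesis \ref{infcon-pi} renders $\pi_k(\restr{F}{\cT_{/U}}) \to \pi_k(\restr{F'}{\cT_{/U}})$ injective (an isomorphism for $k>0$, a map with trivial kernel for $k=0$) and $\pi_{k+1}(\restr{F}{\cT_{/U}}) \to \pi_{k+1}(\restr{F'}{\cT_{/U}})$ surjective (an isomorphism for $k+1<n$, a surjection for $k+1=n$). The trivial kernel on the right then forces the outgoing map $\pi_k(\restr{f}{\cT_{/U}}) \to \pi_k(\restr{F}{\cT_{/U}})$ to be trivial, so $\pi_k(\restr{f}{\cT_{/U}})$ equals the image of the boundary map out of $\pi_{k+1}(\restr{F'}{\cT_{/U}})$; the surjectivity one degree higher forces that boundary map to be trivial in turn, whence $\pi_k(\restr{f}{\cT_{/U}}) = *$. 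Since this holds for every $U$ and every $s$, the reduction of the previous paragraph completes the argument, and the same chase applies verbatim when $n = \infty$.

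The genuinely delicate point, on which I would spend the most care, is the passage in the second paragraph from the local, base-pointed vanishing provided by \ref{infcon-pi} to the global triviality of $\pi_k(f)$ as an object of $\cX_{/F}$. This relies on two structural inputs: that the representable sheaves, with all of their sections, jointly cover $F$ by an effective epimorphism, and that pullback along an effective epimorphism is conservative and commutes with the formation of the relative homotopy objects. Everything else is either immediate from \ref{infcon-surj} or the formal diagram chase recorded above.
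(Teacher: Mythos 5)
Your proof is correct and follows essentially the same route as the paper's: both reduce the effective-epimorphism condition to hypothesis \ref{infcon-surj} via \cite[Prop.~7.2.1.14]{HTT}, and both reduce the vanishing of $\pi_k(f)$ for $0 \le k < n$ to the vanishing of the homotopy of the fibers at basepoints over objects $U \in \cT$, which hypothesis \ref{infcon-pi} supplies through the long exact sequence. The only (immaterial) difference is that the paper checks that $\pi_k(f)^{\tot} \to F$ is an equivalence by evaluating sections over each $U$ and computing fibers over points of $F(U)$, whereas you package the same reduction as conservativity of pullback along the effective epimorphism $\coprod_{(U,s)} h^\sharp_U \to F$.
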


\begin{proof}
This is essentially obtained by unwrapping \cref{infty-conn} when the topos arises from a Grothendieck site.
To this end, we first note that $f$ is an effective epimorphism.
This is equivalent to showing that the induced map $\pi_0(F) \to \pi_0(F')$ is an effective epimorphism \cite[Prop.~7.2.1.14]{HTT}, which follows from hypothesis \ref{infcon-surj}. Second, we need to prove that $\pi_k(f)= *$ as an object of $\tau_{\le 0}(\Shv_{\infty}(\cT)_{/F})$ for all $0\le k <n$.
We may think of $\pi_k(f)$ as a map $\varphi_k \colon \pi_k(f)^{\tot} \to F$ in $\Shv_{\infty}(\cT)$.
Our goal is to show that $\varphi_k$ is an equivalence.
Let $U \in \cT$.
We obtain an induced map $\varphi_k(U) \colon \pi_k(f)^{\tot}(U) \to F(U)$.
Let $p' \colon * \to F(U)$ be a map.
It corresponds to a map $p \colon * \to \restr{F}{\cT_{/U}}$ in $\Shv_{\infty}(\cT)$.
Let $\Fib_{p,U}$ be the fiber of $\restr{f}{\cT_{/U}} \colon \restr{F}{\cT_{/U}} \to \restr{F'}{\cT_{/U}} $ along $\restr{f}{\cT_{/U}}\circ p \colon * \to \restr{F'}{\cT_{/U}}$.
Then by construction, the fiber of $\varphi_k(U) \colon \pi_k(f)^{\tot}(U) \to F(U)$ along $p' \colon * \to F(U)$ is equivalent to $\pi_k (\Fib_{p,U},p)(U)$. Now by our hypothesis, it follows that $\pi_k(\Fib_{p,U},p)$ is trivial.
This implies that $\pi_k (\Fib_{p,U},p)(U)$ is also trivial. Since $p'$ was arbitrary, this implies that the map $\varphi_k(U)$ is an equivalence. Since $U \in \cT$ was arbitrary, we obtain that $\varphi_k$ is an equivalence, as desired.
\end{proof}
We are now ready to prove our main result.
\begin{proof}[{Proof of \cref{mainthm}}]
We pick a Grothendieck site $\cT$ with $\cX \simeq \Shv(\cT)$.
To show that $\cC \colonequals \Shv_\infty(\cT)^\wedge$ is Postnikov complete, by \cref{referee}, we need to prove that the condition \ref{equivalentpost-pretower} $\implies$ \ref{equivalentpost-tower} from \cref{equivalentpost} is satisfied. 
To this end, let $X \colon N(\ZZ^{\infty}_{\ge 0})^{\op} \to \cC$ be a limit diagram (in other words, $X(\infty) \simeq \lim X(i)$) such that the restriction $\restr{X}{N(\ZZ_{\ge 0})^{\op}}$ is a Postnikov pretower.
We must prove that the natural map $\mu_n \colon X(\infty) \to X(n)$ is an $n$-truncation.
By \cref{useful1} and the fact that $X(n)$ is $n$-truncated, it suffices to show that $\mu_n$ is $(n+1)$-connective.

We first argue that $\mu_n$ is an effective epimorphism.
To see this, note that for any $\mathbf{Z}_{\ge 0}$-indexed inverse system $(Y_i)$ in the $\infty$-category $\mathcal{S}$, the natural map $\pi_0 (\lim Y_i ) \to \lim \pi_0 (Y_i)$ is surjective.
Therefore, since limits of sheaves can be computed at the level of presheaves and limits of presheaves are computed objectwise, the natural map $\pi_0^{\pre}(X(\infty)) \simeq \pi_0^{\pre} (\lim X(i)) \to \lim \pi_0^{\pre}(X(i))$ is surjective as a map of presheaves on $\cT$.
But $\pi_0^{\pre}(X(i))$ is a multiplicative presheaf, so from the repleteness of $\cX$ and \cref{lim-sheafification}, it follows that the map
\[ \pi_0(X(\infty)) \simeq \pi_0(\lim_{}X(i)) \to \lim_{}\pi_0(X(i)) \simeq \pi_0 (X(n)) \]
is surjective.
Thus, $\mu_n$ is an effective epimorphism.

In order to conclude that $\mu_n$ is $(n+1)$-connective, we now apply \cref{infcon}.
Hypothesis \ref{infcon-surj} of \cref{infcon} follows from the previous paragraph.
To check hypothesis \ref{infcon-pi}, one can compute the relevant homotopy groups by using the Milnor sequences from \cref{milreplete}. Note that here we also use the fact that for any $U \in \mathcal T$, the 1-topos $\Shv(\mathcal{T}_{/U})$ is also replete and that the $\lim^1$ of any constant pro-system is trivial. This finishes the proof. 
\end{proof}
We include two applications of \cref{mainthm}; the necessary notations and background are explained before the statements of the corollaries.
\begin{notation}\label{not1}
For any $\infty$-topos $\cX$ and any $\infty$-category $\cC$, we denote by $\Shv_\infty(\cX,\cC)$ the $\infty$-category of $\cC$-valued sheaves \cite[Def.~1.3.1.4]{SAG}.
If $R$ is a connective $\EE_1$-ring and $\cC = \LMod_R$ is the stable $\infty$-category of left $R$-modules (see \cite[\S~7.1.1]{HA}), then $\Shv_\infty(\cX,\LMod_R)$ is naturally equipped with a $t$-structure (by extending the construction of \cite[Prop.~2.1.1.1]{SAG}).
\end{notation}
We thank Peter Haine for pointing out the following corollary of \cref{mainthm}, which generalizes \cite[Prop.~3.3.3]{proet}.
\begin{corollary}
Let $\cX$ be a replete topos and $R$ be a connective $\EE_1$-ring.
Then the stable $\infty$-category $D(\cX,R) \colonequals \Shv_\infty\bigl(\Shv_\infty(\cX)^\wedge,\LMod_R\bigr)$ equipped with the $t$-structure from \cref{not1} is left complete (see \cite[\S~1.2.1]{HA}).
\end{corollary}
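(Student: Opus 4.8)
The plan is to reduce left completeness of the $t$-structure on $D(\cX,R)$ to the Postnikov completeness statement already proven in \cref{mainthm}. Recall that a $t$-structure is \emph{left complete} if the natural functor from the stable $\infty$-category to the limit of its Postnikov-type truncation tower $\lim_n D(\cX,R)_{\le n}$ is an equivalence; equivalently, an object $M$ is recovered as $M \simeq \lim_n \tau_{\le n} M$ and, crucially, $\lim_n \tau_{\ge -n} \tau_{\le n} M$ behaves correctly. The standard criterion (see \cite[Prop.~1.2.1.19]{HA}) says that left completeness is equivalent to the vanishing of the limit $\lim_n \tau_{\le -n} M \simeq 0$ for all $M$, or equivalently that the functor $M \mapsto \{\tau_{\le n} M\}_n$ induces an equivalence onto the inverse limit.

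First I would unwind the definition of $D(\cX,R) = \Shv_\infty(\Shv_\infty(\cX)^\wedge, \LMod_R)$ and identify the truncation functors of its $t$-structure with the objectwise Postnikov truncations coming from the ambient $\infty$-topos $\Shv_\infty(\cX)^\wedge$. Since the $t$-structure of \cref{not1} is built by extending \cite[Prop.~2.1.1.1]{SAG}, its truncations are computed via the truncations on the target $\LMod_R$ transported sheafwise, and the key compatibility is that the connective cover and Postnikov truncation functors interact well with hypercomplete sheaves. The plan is to leverage that $\Shv_\infty(\cX)^\wedge$ is Postnikov complete (\cref{mainthm}) to deduce that the tower $\{\tau_{\le n} M\}_n$ converges: for any $M \in D(\cX,R)$, the natural map $M \to \lim_n \tau_{\le n} M$ should be an equivalence, which is exactly the content of left completeness once we know the truncations assemble into something like a Postnikov pretower.

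The technical bridge is to pass from module spectra back to the underlying $\infty$-topos. An object $M$ of $D(\cX,R)$ has an underlying hypercomplete sheaf of spectra, and each truncation $\tau_{\le n} M$ has truncated homotopy sheaves; the Postnikov tower of $M$ in the $t$-structure sense corresponds, after forgetting the $R$-module structure and applying the appropriate connective/stable reindexing, to a Postnikov-type tower in $\Shv_\infty(\cX)^\wedge$. Concretely, I would check that the forgetful functor $\Shv_\infty(\Shv_\infty(\cX)^\wedge, \LMod_R) \to \Shv_\infty(\Shv_\infty(\cX)^\wedge, \Sp)$ is conservative and preserves the relevant limits, reducing to the case $R = \bS$ of sheaves of spectra; then one uses that a tower of spectra converges if and only if each of its shifted Postnikov truncations does, which follows from Postnikov completeness of the base $\infty$-topos applied levelwise to the infinite-loop spaces $\Omega^\infty \Sigma^k$.

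The hard part will be carefully matching the \emph{stable} notion of left completeness (a statement about $\lim_n \tau_{\le -n} M \simeq 0$, phrased via an $\NN$-indexed limit of truncation functors) with the \emph{unstable/space-level} Postnikov completeness of \cref{mainthm}, since the former involves connective covers $\tau_{\ge -n}$ going to $-\infty$ while the latter concerns truncations $\tau_{\le n}$ going to $+\infty$. The cleanest route is likely to invoke that in a Postnikov-complete $\infty$-topos the sheaves of spectra automatically form a left-complete stable category — a known implication, essentially because Postnikov completeness of the $\infty$-topos says exactly that every object is the limit of its truncations, and the stable enhancement over a connective $\EE_1$-ring inherits this completeness by a standard descent/base-change argument. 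I would verify that the passage to $\LMod_R$-valued sheaves preserves left completeness, using that $R$ is connective so that $\tau_{\le n}$ on modules is computed via the $t$-structure on $\LMod_R$ compatibly with the sheaf truncations. Establishing this compatibility rigorously, rather than the convergence itself, is where the real work lies.
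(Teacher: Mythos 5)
Your proposal follows essentially the same route as the paper: reduce to $R=\bS$ via the conservative, limit-preserving, $t$-exact forgetful functor $D(\cX,R)\to\Shv_\infty(\cX,\Sp)^\wedge$, then transport Postnikov completeness of $\Shv_\infty(\cX)^\wedge$ (\cref{mainthm}) through $\Omega^\infty$ on connective objects. One small correction: left completeness is governed by the vanishing of $\lim_n\tau_{\ge n}M$ as $n\to+\infty$ (the fibers of $M\to\tau_{\le n}M$), not of $\lim_n\tau_{\le -n}M$, so the tower in question is exactly the Postnikov tower of the \emph{connective} cover and there is no mismatch of directions to resolve.
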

\begin{proof}
Let $\Sp$ be the $\infty$-category of spectra.
Set $\Shv_{\infty}(\cX, \Sp)^{\wedge} \colonequals \Shv_{\infty}(\Shv_{\infty}(\cX)^{\wedge},\Sp)$. Using the notations from \cite[\S~1.3.2]{SAG}, we have a conservative, limit preserving functor
\[ \Shv_{\infty}(\cX,\Sp)^{\wedge}_{\ge 0} = \Shv_\infty(\Shv_\infty(\cX)^\wedge,\Sp)_{\ge 0} \xrightarrow{\Omega^\infty} \Shv_\infty(\Shv_\infty(\cX)^\wedge,\Ani) \simeq \Shv_\infty(\cX)^\wedge \]
that commutes with the natural truncation functors.
Since $\cX$ is replete, by \cref{mainthm}, $\Shv_{\infty}(\cX)^{\wedge}$ is Postnikov complete. Now, using the properties of the functor $\Omega^\infty$ stated above, the condition \ref{equivalentpost-tower} $\iff$ \ref{equivalentpost-pretower} from \cref{equivalentpost} continues to hold in $\Shv_{\infty}(\cX, \Sp)^{\wedge}_{\ge 0}$.
Thus $\Shv_{\infty}(\cX,\Sp)^{\wedge}$ is left complete.
For an arbitrary connective $\EE_1$-ring $R$, it similarly follows that $D(\cX,R)$ is left complete because the natural functor $D(\cX,R) \to \Shv_{\infty}(\cX,\Sp)^{\wedge}$ is conservative, limit preserving and $t$-exact (\textit{cf.}\ \cite[Prop.~2.1.1.1.(a)]{SAG} and \cite[Prop.~2.1.0.3.(3)]{SAG}).
\end{proof}
Next, we discuss an application of \cref{mainthm} in the theory of affine stacks \cite[\S~2.2]{Toe}. 
\begin{remark}\label{lastrmk}
In \cite[p.~55]{Toe}, To\"en shows that if $F$ is a pointed connected affine stack over a field, then the natural map $F \to \lim_{n} \tau_{\le n} F$ is an equivalence.
The proof uses the criterion from \cite[Prop.~1.2.2]{Toe} and requires the representability of $\pi_n (F, *)$ by unipotent affine group schemes and certain vanishing of cohomology with coefficients in such group schemes. Since affine stacks are hypercomplete and the fpqc topos (after fixing set-theoretic issues, as in \cite{Toe}) is replete, one obtains as a corollary of \cref{mainthm} the following generalization, which removes the assumptions that the stack $F$ is connected (i.e., $\pi_0 (F,*) \simeq *$) and defined over a field.
\end{remark}

\begin{corollary}\label{lastcor}
Let $F$ be an affine stack over $\Spec B$ for any ring $B$. Then the natural map $F \to \lim_{n} \tau_{\le n} F$ is an equivalence. 
\end{corollary}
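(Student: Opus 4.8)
The plan is to realize $F$ as an object of a hypercomplete $\infty$-topos of the shape $\Shv_\infty(\cX)^\wedge$ for a \emph{replete} $1$-topos $\cX$, and then to invoke \cref{mainthm} directly. Concretely, an affine stack over $\Spec B$ in the sense of \cite[\S~2.2]{Toe} is a sheaf of spaces on the fpqc site of affine $B$-schemes that is moreover hypercomplete, as recalled in \cref{lastrmk}. After dealing with the usual set-theoretic issues (replacing the fpqc site by a small site with equivalent sheaf theory, as in the footnote of \cite[Ex.~3.1.7]{proet}), I would fix a small Grothendieck site $\cT$ whose associated $1$-topos $\cX \colonequals \Shv(\cT)$ is the fpqc topos; this topos is replete by \cref{replete-not-finite}. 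Since $F$ is hypercomplete, it is an object of $\cC \colonequals \Shv_\infty(\cX)^\wedge = \Shv_\infty(\cT)^\wedge$, and its truncations $\tau_{\le n}F$ are again objects of $\cC$ (being $n$-truncated, hence hypercomplete by \cref{darm1}), so both $F$ and $\lim_n \tau_{\le n} F$, computed in $\cC$, agree with those computed in the $\infty$-category of stacks.

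With the ambient topos identified, \cref{mainthm} gives that $\cC$ is Postnikov complete. It then remains to extract from Postnikov completeness the pointwise statement. For this I would consider the functor $X \colon N(\ZZ_{\ge 0}^\infty)^\op \to \cC$ with $X(n) = \tau_{\le n} F$ for $n \in \ZZ_{\ge 0}$, with $X(\infty) = F$, and with transition maps the canonical truncation maps. By the very definition of truncation, each map $X(\infty) = F \to \tau_{\le n} F = X(n)$ exhibits $X(n)$ as an $n$-truncation of $X(\infty)$, so $X$ is a Postnikov tower in the sense of \cref{conv1}, i.e.\ it satisfies condition \ref{equivalentpost-tower} of \cref{equivalentpost}. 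Since $\cC$ is Postnikov complete, \cref{equivalentpost} guarantees that \ref{equivalentpost-tower} and \ref{equivalentpost-pretower} are equivalent for $X$; in particular $X$ satisfies \ref{equivalentpost-pretower} and is therefore a limit diagram. Unwinding this, the canonical map $X(\infty) \to \lim_n X(n)$ is an equivalence, which is precisely the assertion that $F \to \lim_n \tau_{\le n} F$ is an equivalence. (Alternatively, this is exactly the pointwise consequence of Postnikov completeness isolated in \cref{rmkmore}.)

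I expect the only genuine obstacle to be the first step: verifying that $F$ really does lie in the hypercomplete $\infty$-topos attached to a replete $1$-topos, which amounts to the two inputs recorded in \cref{lastrmk}, namely that affine stacks are hypercomplete and that the (appropriately small) fpqc topos is replete. Once these are granted, the remainder is a purely formal unwinding of the definitions of Postnikov tower and Postnikov completeness via \cref{equivalentpost}. In particular, no cohomological vanishing statements, representability of homotopy group schemes, or finiteness or fieldhood hypotheses on $B$ enter, in contrast with the earlier argument described in \cref{lastrmk}.
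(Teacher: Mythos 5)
Your proposal is correct and follows exactly the paper's argument: the paper proves this corollary purely via \cref{lastrmk} (affine stacks are hypercomplete and the suitably small fpqc topos is replete), then applies \cref{mainthm}, with the passage from Postnikov completeness to the pointwise statement $F \xrightarrow{\sim} \lim_n \tau_{\le n}F$ being the formal unwinding via \cref{equivalentpost} that you spell out. Your additional care about computing the limit and the truncations in the hypercompletion versus the ambient $\infty$-topos of stacks is a reasonable elaboration of what the paper leaves implicit.
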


\begin{example}\label{counterexample}
We end our paper with an example that shows that there are Grothendieck sites $\mathcal{T}$ for which the $1$-topos $\Shv(\mathcal{T})$ is replete but $\Shv_{\infty}(\mathcal{T})$ is not hypercomplete (and thus not Postnikov complete). To do so, we use a 1-topos due to Dugger--Hollander--Isaksen \cite[Ex.~A.9]{DHI} (modifying a suggestion of Carlos Simpson).
Let $\cT$ be the following Grothendieck site: 
\begin{enumerate}
    \item Objects of $\cT$ are the open subintervals $X_n \colonequals (\tfrac{3^n-1}{2 \cdot 3^n},\tfrac{3^n+1}{2 \cdot 3^n})$, $U_n \colonequals (\tfrac{3^n-1}{2 \cdot 3^n},\tfrac{3^{n+1}+1}{2 \cdot 3^{n+1}})$, and $V_n \colonequals (\tfrac{3^{n+1}-1}{2 \cdot 3^{n+1}},\tfrac{3^n+1}{2 \cdot 3^n})$ of $(0,1)$.
    \item Morphisms are inclusions of underlying sets.
    \item Covers are given by jointly surjective maps.
\end{enumerate}
One may think of the category $\cT$ in terms of the following poset:
\[ \begin{tikzcd}
& U_0 \arrow[ld,hook] && U_1 \arrow[ld,hook] && \dotso \arrow[ld] \\
X_0 && X_1 \arrow[lu,hook] \arrow[ld,hook] && X_2 \arrow[lu,hook] \arrow[ld,hook] \\
& V_0 \arrow[lu,hook] && V_1 \arrow[lu,hook] && \dotso \arrow[lu]
\end{tikzcd} \]

We claim that the $1$-topos $\Shv(\cT)$ is locally weakly contractible in the sense of \cite[Def.~3.2.1]{proet}.
Indeed, the site $\cT$ is subcanonical and every object is qcqs.
Since every covering family of the objects $U_n$ (resp.\ $V_n$) contains $U_n$ (resp.\ $V_n$), the sheaves $h_{U_n}$ (resp. $h_{V_n}$) are weakly contractible.
Further, given any $Z \in \cT$, there exists a surjection $\left( h_{U_n} \coprod h_{V_n} \right) \twoheadrightarrow h_Z$ for some $n \ge 0$.
This implies that $\Shv(\cT)$ is locally weakly contractible.
In particular, $\Shv(\cT)$ is replete \cite[Prop.~3.2.3]{proet}.

However, $\cX \colonequals \Shv_{\infty}(\cT)$ is not hypercomplete.
In fact, \cite[Ex.~A.9]{DHI} exhibits an example of a non-hypercomplete sheaf $G$ with values in $D(\ZZ)_{\ge 0}$;
it is given by
\[ G(X_n) \colonequals \ZZ[\Sing(S^{n-1})], \quad G(U_n) \colonequals \ZZ[\Sing(D^n_+)], \quad G(V_n) \colonequals \ZZ[\Sing(D^n_-)], \]
where $D^n_+$ (resp.\ $D^n_-$) denotes the upper (resp.\ lower) hemisphere of $S^n$.
We use the convention that $S^{-1} \colonequals \varnothing$; in particular, $\ZZ[\Sing(S^{-1})] = 0$.
The maps induced by the standard inclusions of topological spaces $D^n_+ \hookrightarrow S^n$ and $D^n_{-} \hookrightarrow S^n$ naturally equip $G$ with the structure of a presheaf.
The sheaf property amounts to checking \v{C}ech descent, which essentially follows from the Mayer--Vietoris theorem for the open cover $S^n = D^n_+ \cup D^n_-$.
To show that $G$ is not hypercomplete, \cite[Ex.~A.9]{DHI} constructs an explicit hypercover for which $G$ does not satisfy descent.
Alternatively, one can simply observe that for all $q > 0$, we have
\[ \pi_q(G) \simeq \bigl(\pi^\pre_q(G)\bigr)^\sharp \simeq 0. \]
Here, the latter isomorphism follows from the isomorphisms $H_i (D^n_{+}, \ZZ) \simeq H_i (D^n_{-},\ZZ)=0$ for $i>0$ and the fact that every object of the site $\cT$ admits a refinement by $U_n$ and $V_n$. Therefore, the natural map $G \to \tau_{\le 0} G$ is $\infty$-connective. However, it cannot be an equivalence since $G$ is not $0$-truncated (e.g., $\pi_n(G(X_{n+1})) \simeq H_n (S^n, \ZZ) \simeq \ZZ$).
This shows that $G$ is not hypercomplete.
\end{example}

\bibliographystyle{amsalpha}
\bibliography{references}

\end{document}